\documentclass[12pt,a4paper]{article} 
 \usepackage{hyperref}

\usepackage{a4wide,amsmath,amssymb,amsthm,mathtools,
xcolor}
\numberwithin{equation}{section}
\usepackage[all]{xy}
\usepackage{tikz}
\usetikzlibrary{cd}
\usepackage[mathcal]{euscript}
\usepackage{mathrsfs}
\usepackage{enumitem}
\usepackage[capitalise]{cleveref} 
\let\mathcal=\mathscr

\definecolor{aloe}{rgb}{0.0, 0.5, 0.0}
\newtheorem{thm}{Theorem}[section]

\newtheorem{lemma}[thm]{Lemma}

\newtheorem{prop}[thm]{Proposition}

\theoremstyle{remark}
\newtheorem{rem}[thm]{Remark}
\newtheorem{ex}[thm]{Example}
\newenvironment{remark}{\begin{rem}\rm}{\parbox{2mm}{\hfill}\qee\end{rem}}

\newcommand{\parag}[1]{\paragraph{#1.}}
\newcommand{\E}{\mathfrak E}

\newcommand{\grass}[2]{\operatorname{Gr}_{#1}(#2)}
\renewcommand{\Im}{\operatorname{Im}}

\newcommand{\rk}{\operatorname{rk}}

\newcommand{\C}{{\mathbb C}}

\newcommand{\Q}{{\mathbb Q}}

\newcommand{\qee}{\mbox{\hspace{0.2mm}}\hfill$\triangle$}

\newcommand{\tr}{\operatorname{tr}}

\newcommand{\Oc}{\mathcal O}          % structure sheaf
\newcommand{\Gr}{\mathrm{Gr}}         % Grassmann bundle of subbundles
\newcommand{\HGr}{\mathfrak{Gr}}      % Higgs Grassmannian
\newcommand{\Ps}{\mathbb P}
\begin{document}
\begin{center} \bf  RANK 3 CURVE SEMISTABLE HIGGS BUNDLES: 
\\[3pt] THE NILPOTENT CASE\end{center}
 
\thispagestyle{empty} \vspace{-3mm}
\begin{center}{\sc Ugo Bruzzo$^{ab}$  and Beatriz Gra\~na Otero$^{c}$}  \\[5pt]
\small
$^a$ Departamento de Matem\'atica, Instituto de Ci\^encias \\
Exatas,  Universidade Federal    de Minas Gerais, \\
Av.~Ant\^onio Carlos 6627, Belo Horizonte, 31270-901 MG, Brazil \\
$^b$ IGAP (Institute for Geometry and Physics), Trieste, Italy  \\
$^c$ Departamento de Matem\'aticas  and IUFFYM (Instituto de \\
F\'\i sica Fundamental  y Matem\'aticas),  Universidad de Salamanca, \\
Plaza de la Merced 1-4, 37008 Salamanca, Spain 
\\[3pt]
Email: {\tt ubruzzo@ufmg.br,   beagra@usal.es}

%}
\end{center}

\vfill

\begin{abstract}
A Higgs bundle on a smooth complex projective variety is curve semistable if its
pullback to every smooth projective curve is semistable. A 2008 conjecture
asserts that this is equivalent to slope semistability for some polarization plus
vanishing discriminant. One direction is known, the converse holds in rank two,
and Huang recently gave a rank-four counterexample. We
prove the conjecture for rank three nilpotent Higgs bundles in any dimension.
We treat separately the two possible cases: if the Higgs field $\phi$ is nilpotent of order two,
 there is a divisorial component of a Higgs Grassmannian where nefness of the
normalized tautological class forces the discriminant to vanish; otherwise, $\phi$ is regular nilpotent, and then the canonical kernel flag,
commutativity and Bogomolov's lemma on line subbundles of $\Omega^1_X$
complete the proof.
\end{abstract}

\vfill 

%\tableofcontents

\vfill

\noindent\begin{minipage}{\textwidth} \small
\parbox{\textwidth}{\hrulefill} \\
Date: \today \\
MSC 2020: 14H60, 14J60, 14M15 \\
Keywords: Higgs bundles, Grassmann bundles, curve semistability  \\
Research partly supported by Bolsa de Produtividade 305343/2025-4 of Brazilian CNPq (UB),   Grant PID2021-128665NB-I00 funded by MCIN/AEI/10.13039/501100011033 (Spain),   ``ERDF A way of making Europe,''  Universidad de Salamanca through Programa XIII (BGO).

\end{minipage}

\newpage

\section{Introduction}

\parag{The conjecture}
Let $X$ be a smooth complex projective variety and let $\mathfrak E=(E,\phi)$ be a Higgs bundle on $X$ of rank $r$, that is, a vector bundle $E$ with a morphism $\phi\colon E\to E\otimes\Omega^1_X$ satisfying the commutativity condition $\phi\wedge\phi=0$, where 
$ \phi  \wedge\phi \colon E\to E\otimes\Omega^2_X$ 
is the morphism induced by $\phi$. 
Write
\[
\Delta(E)=c_2(E)-\frac{r-1}{2r}\,c_1(E)^2\in H^4(X,\mathbb Q)
\]
for the discriminant of $E$. For a morphism $f\colon C\to X$ from a smooth connected projective curve, the pullback $f^*\mathfrak E$ is the Higgs bundle $(f^*E,f^*\phi)$ on $C$ whose Higgs field is the pullback of $\phi$ contracted along $df$, and $\mathfrak E$ is said to be \emph{curve semistable} if $f^*\mathfrak E$ is a semistable Higgs bundle on $C$ for every such $f$. The two conditions
\begin{enumerate}\itemsep=-2pt
\item[(1)] $\mathfrak E$ is semistable with respect to some polarization of $X$ and $\Delta(E)=0$;
\item[(2)] $\mathfrak E$ is curve semistable
\end{enumerate}
are the subject of this paper. For ordinary vector bundles, that is for $\phi=0$, they are equivalent: this is the theorem of Nakayama and of the first author and Hern\'andez Ruip\'erez \cite{Nakayama99,BHR06}. For Higgs bundles the implication (1) $\Rightarrow$ (2) was proved in \cite{BHR06}. The converse was conjectured in \cite{BGO11}; since curve semistability implies semistability with respect to every polarization, its content is the statement that
\begin{equation}\label{eq:conj}
\text{\em a curve semistable Higgs bundle has vanishing discriminant.}
\end{equation}

Before the present work the conjecture was known in the following cases: for Higgs bundles on the ``Higgs varieties'' of \cite{BLG16,LLG17}, a class of varieties defined by the validity of the conjecture on them, which includes for instance  varieties with numerically effective tangent bundle, and simply connected Calabi-Yau varieties \cite{BL19,BC23}; for Higgs bundles of rank two on an arbitrary $X$ \cite{BGH23}; and, more generally, on an arbitrary $X$ and in arbitrary rank, for Higgs bundles possessing a Higgs Grassmannian with an irreducible component which is a divisor in the ambient Grassmann bundle and dominates $X$ \cite[Thm.~3.3]{BGH23}. 
 
 Curve semistability is equivalent to the nefness of the normalized tautological classes $\theta_s(\mathfrak E)$ on the schemes $\mathfrak{Gr}_{r-s}(\mathfrak E)$ of rank $s$ Higgs quotients \cite{BHR06,BGO11}; on a dominating divisorial component when $X$ is a surface, of such a scheme the top self-intersection of $\theta_s(\mathfrak E)$ is a negative multiple of $\Delta(E)$, so that nefness gives $\Delta(E)\le0$, while Bogomolov's inequality for semistable Higgs bundles \cite{Simpson88,Simpson92} gives $\Delta(E)\ge0$, so that $\Delta(E)=0$. Then this is extended to arbitrary dimension of $X$.
 
\parag{Huang's counterexample}
The conjecture is false in general. Huang \cite{Huang26} has constructed, on the second symmetric product $X=\Sigma^{(2)}$ of a very general plane quintic $\Sigma$, a rank four Higgs bundle $\mathfrak E_s=(E_s,\phi_s)$ with nilpotent Higgs field, $\phi_s\circ\phi_s=0$, which is curve semistable, has trivial determinant, and $c_2(E_s)=\Delta(E_s)=10$. The underlying bundle is a nonsplit extension $0\to F^*\to E_s\to F\to0$ of the tautological rank two bundle $F$ on $X$ by its dual, and the Higgs field is the composite of the projection $E_s\to F$, a natural morphism $\psi\colon F\to F^*\otimes\Omega^1_X$, and the inclusion $F^*\to E_s$. The extension $E_s$ is determined by a section $s$ of $\Oc_\Sigma(1)$, which is chosen so as to make $E_s$ curve semistable.   The kernel $\phi_s$ and the image of the associated morphism $E_s\otimes T_X\to E_s$ coincide, both being $F^*$, and the Higgs Grassmannians of $\mathfrak E_s$ have no dominating divisorial component,

\parag{The main result}
In this paper we prove the conjecture for Higgs bundles of rank three whose Higgs field is nilpotent. Here $\phi$ is called nilpotent if for every local vector field $t$ the endomorphism $\phi_t=\langle\phi,t\rangle$ of $E$ is nilpotent, or equivalently, if the coefficients of the characteristic polynomial of $\phi$, which are sections of the symmetric powers $S^i\Omega^1_X$, all vanish.

The main content of this paper is the following result.
\begin{thm}\label{thm:main}
Let $\mathfrak E=(E,\phi)$ be a Higgs bundle of rank three with nilpotent Higgs field on a smooth complex projective variety $X$ of any dimension. Then $\mathfrak E$ is curve semistable if and only if it is semistable with respect to some polarization and $\Delta(E)=0$.
\end{thm}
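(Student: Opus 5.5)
The direction (1) $\Rightarrow$ (2) is \cite{BHR06}. For the converse, curve semistability gives semistability with respect to every polarization, so by \eqref{eq:conj} and Bogomolov's inequality \cite{Simpson88,Simpson92} it suffices to prove $\Delta(E)\cdot H^{n-2}\le 0$ for an ample $H$. Restricting to complete intersection surfaces (curve semistability pulls back), we may assume $X$ is a surface, at the cost of checking that the structures below survive restriction. Since $\phi$ is nilpotent of rank $3$, at the generic point we have either $\phi\wedge\phi$-commuting nilpotent endomorphisms $\phi_t$ with $\phi_t\phi_{t'}=0$ (order two), or some $\phi_t$ regular nilpotent. I treat these two cases separately.

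\emph{Order two case.} Let $K\subset E$ be the saturation of the common kernel of $\phi$, and $I\subset E$ the saturation of the image of $E\otimes T_X\to E$. Then $\phi\circ\phi=0$ gives $I\subseteq K$, and every subsheaf containing $I$ is $\phi$-invariant. If $\rk K=2$, every rank two $F$ with $I\subseteq F$ is $\phi$-invariant. When $\rk I\le1$, these $F$ sweep out a family in each fibre $\Gr_2(E_x)$ of dimension at least one, namely the pencil of planes containing the line $I_x$ (or all planes if $I=0$). Pencils of planes through a line are divisors in $\Gr_2(\C^3)\cong\Ps^2$, so $\HGr_1(\mathfrak E)$ has a component that is a divisor in the Grassmann bundle and dominates $X$. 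Then \cite[Thm.~3.3]{BGH23} gives $\Delta(E)=0$. If $\rk K=1$, then $I\subseteq K$ forces $\rk I\le1$, and I use the dual Higgs bundle $\mathfrak E^*$, which is curve semistable with the same discriminant and whose kernel has rank $3-\rk I\ge2$. This reduces to the previous situation.

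\emph{Regular nilpotent case.} Now $\phi_t$ is generically regular for some $t$. Commutativity implies that every $\phi_{t'}$ commutes with $\phi_t$, hence is a polynomial in $\phi_t$ with zero constant term. So $\phi$ has the form $\phi=N\otimes\alpha+N^2\otimes\beta$, where the pair $(N,\alpha)$ must be chosen globally. The canonical flag $0\subset K_1\subset K_2\subset E$ is given by the saturated kernels of $\phi$ and $\phi^2$, with line bundle quotients $L_1=K_1$, $L_2=K_2/K_1$, $L_3=E/K_2$. The field $\phi$ induces nonzero maps $L_3\to L_2\otimes\Omega^1_X$ and $L_2\to L_1\otimes\Omega^1_X$, and commutativity forces their images to lie in one saturated line subbundle $M\subset\Omega^1_X$; this is the step where $\phi\wedge\phi=0$ is used. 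We get nonzero maps $L_3\to L_2\otimes M$ and $L_2\to L_1\otimes M$, hence effective classes $D_1=c_1(L_2)+c_1(M)-c_1(L_3)$ and $D_2=c_1(L_1)+c_1(M)-c_1(L_2)$.

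Curve semistability, applied to the invariant subsheaves $K_1,K_2$ restricted to curves, gives nefness-type inequalities. In particular, pulling back to curves where $\phi$ degenerates shows that $\mu(L_1)\le\mu(E)\le\mu(L_3)$ must hold on every curve. Writing $\Delta(E)$ in terms of $c_1(L_i)$ and the zero-cycles of the flag, $\Delta$ becomes a negative combination of squares of differences of the $c_1(L_i)$, plus effective terms. Here I would invoke Bogomolov's lemma: a line subbundle $M\subset\Omega^1_X$ satisfies $c_1(M)^2\le0$ when $M$ is big-ish, and otherwise $M$ is governed by a fibration. This lets me control $c_1(M)$ and show that $D_1,D_2$ are numerically trivial on curves. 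Indeed, curve semistability forces $\deg(L_3-L_2)\le\deg M|_C$ and so on, and positivity of $M$ would contradict semistability along the fibres of the associated fibration. The conclusion should be that the $c_1(L_i)-c_1(E)/3$ are numerically trivial and the flag has no singular points, so $\Delta(E)=0$.

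The main obstacle is this last case: combining the curve-semistability inequalities with Bogomolov's lemma to kill both the $c_1(M)$ contribution and the non-locally-free locus of the flag. I would first try the case where $M$ has Iitaka dimension $1$, using pullback to the fibres of the induced fibration.
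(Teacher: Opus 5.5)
Your architecture matches the paper's: reduction to surfaces, the order-two/regular dichotomy, and a dominating divisorial component plus Proposition~\ref{prop:BGH} in the first case. In the second case you also have the right ingredients: the canonical flag, nefness of $u,v$, a common line subbundle $N\subset\Omega^1_X$, and Bogomolov's lemma. But the regular nilpotent case lacks its decisive steps, and the order-two case has a gap in its case split.

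\textbf{The regular nilpotent case.} First, the conclusion you aim for is false. The classes $c_1(L_i)-\frac13c_1(E)$ need not be numerically trivial. For instance, pull back the uniformizing Higgs bundle $K_C\oplus\Oc\oplus K_C^{-1}$ of a curve $C$ of genus $\ge2$ to $B\times C$ via the projection $p$. This is curve semistable, of type (N2), with $u=v=3p^*K_C\not\equiv0$.

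What is true, and enough, is $u^2=uv=v^2=0$. It follows from \eqref{eq:3n} with $\alpha',\beta'$ effective:
if $u^2>0$, then $u$ is nef and big, hence so is $3\nu=u+2\alpha'+\beta'$;
if $uv>0$, then $u+v$ is nef and big, hence so is $6\nu=(u+v)+3\alpha'+3\beta'$.
Both contradict $\kappa(X,N)\le1$, which is what Bogomolov's lemma actually says; it gives no bound on $c_1(N)^2$.

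More seriously, by \eqref{eq:Deltauv} this only leaves $\Delta(E)=\zeta_2+\zeta_3\ge0$. The zero-cycle lengths enter with the wrong sign, so you must prove that the flag consists of subbundles, and you give no mechanism for this. The curve inequalities you get from saturations of $f^*K_i$ are exactly those of Lemma~\ref{lem:nef}, and they only give nefness of $u,v$.

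The missing idea is to use nefness of $\theta_s(\mathfrak E)$ on a codimension-two component of the Higgs Grassmannian. The closure $\Sigma_1\subset\Ps E$ of the section given by the lines $E_{1,x}$ lies in $\HGr_1(\mathfrak E)$. Take a resolution $\tilde\rho\colon\widetilde\Sigma_1\to X$. The saturation of $\tilde\rho^*E_1$ in $\tilde\rho^*E$ is $\tilde\rho^*E_1(e)$, where $e$ is effective, supported on exceptional curves, and nonzero exactly when $E_1$ is not a subbundle. So $\theta_2(\mathfrak E)$ pulls back to $\frac13\tilde\rho^*u-e$. Since $e\cdot\tilde\rho^*u=0$, its self-intersection is $\frac19u^2+e^2=e^2$. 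Nefness gives $e^2\ge0$, hence $e=0$ by negative definiteness. Thus $E/E_1$ is locally free and $\zeta_2=0$. The same argument with the section of $\Ps(E^\vee)$ given by $E\to E/E_2$, and with $\theta_1$, gives $\zeta_3=0$ (Proposition~\ref{prop:subbundles}).

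\textbf{The order-two case.} For $\mathfrak E^\vee=(E^\vee,\phi^t)$ the kernel has rank $3-\rk I$ and the saturated image has rank $3-\rk K$. So dualizing the case $(\rk I,\rk K)=(1,1)$ lands in the case $(2,2)$, not in the case you treated. The case $(2,2)$ itself is never handled: there the planes containing $I$ reduce to the single plane $K$, a codimension-two section.

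The repair is easy. Use the lines in $K$, which form a divisor in $\Ps E$ whenever $\rk K=2$. Use also the pencils of planes through $I_x$, which form a divisor in $\Ps(E^\vee)$ whenever $\rk I=1$, regardless of $\rk K$. These two cases are exhaustive and no dualization is needed (Proposition~\ref{lem:N1}). Either family is an irreducible component, because the Higgs Grassmannian is not the whole Grassmann bundle when $\phi\ne0$.

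\textbf{Higher dimension.} Going from $\Delta(E)\cdot H^{n-2}=0$ to $\Delta(E)=0$ in $H^4(X,\Q)$ requires Simpson's theorem applied to $\mathrm{End}(\mathfrak E)$, as in Step~3 of Proposition~\ref{prop:BGH}.
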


Since the Higgs field of a rank three Higgs bundle is nilpotent as soon as its trace-free part has vanishing $\sigma_2\in H^0(S^2\Omega^1_X)$ and $\sigma_3\in H^0(S^3\Omega^1_X)$, and since subtracting $\frac13\operatorname{tr}\phi\otimes\mathrm{id}$ changes neither semistability nor the discriminant, Theorem~\ref{thm:main} gives the conjecture in rank three outright on every variety with $H^0(X,S^2\Omega^1_X)=H^0(X,S^3\Omega^1_X)=0$. This is the case, for instance, for smooth surfaces in $\mathbb P^3$,
which is of some interest because in degree at least five such surfaces are of general type, a class of varieties on which the conjecture had so far been verified in no nontrivial instance for rank higher than two.

\parag{Sketch of the proof} As a first step, we study the case of a rank three system of Hodge bundles, i.e., we assume that $E = E_1 \oplus E_2$, where $E_1$ and $E_2$ have ranks one and two (in some order), and the Higgs field $\phi$ maps $E_1$ to $E_2 \otimes \Omega^1_X$ and vanishes on $E_2$.
When $\mathfrak E=(E, \phi)$ is a rank three nilpotent Higgs bundle there are two possibilities: either $\phi \circ \phi=0$ or some contraction $\phi_t$ with the local vector field $t$ is a regular nilpotent endomorphism at the general point of $X$. The first situation reduces to the previous case of a system of Hodge bundles, while the second is treated in a different way, using a result by Bogomolov about the line subbundles of the cotangent bundle.

Both for systems of Hodge bundles and for general nilpotent Higgs bundles of rank three, a central role is played by the analysis of the structure of the Higgs Grassmannians (see Section \ref{sec:hgr}).

\parag{Conventions}
All varieties are smooth and projective over $\mathbb C$ unless stated otherwise. For a Higgs bundle $\mathfrak E=(E,\phi)$ and a local vector field $t$ we write $\phi_t=\langle\phi,t\rangle\in\operatorname{End}(E)$. Semistability on $X$ is slope semistability with respect to a polarization. The Grassmann bundle of rank $d$ subbundles of $E$ is $\mathrm{Gr}_d(E)$;  $\mathbb PE=\mathrm{Gr}_1(E)$ parameterizes lines in $E$ and $\mathbb P(E^\vee)=\mathrm{Gr}_{r-1}(E)$ rank one quotients of $E$. $\mathfrak{Gr}_d(\mathfrak E)\subset\mathrm{Gr}_d(E)$ is the Higgs Grassmannian of $\phi$-invariant rank $d$ subbundles, and it is also the scheme of rank $r-d$ Higgs quotients. Systems of Hodge bundles are written $E=\bigoplus_pE_p$ with $\phi(E_p)\subset E_{p-1}\otimes\Omega^1_X$.

\parag{AI declaration} Claude Fable 5.1 was used to assist with the initial drafting of specific sections and to expedite computational checks, while the final logical structure and mathematical validation remain entirely the authors' own.

\section{Higgs Grassmannians}\label{sec:hgr}

We collect here what will be needed about Higgs Grassmannians and their tautological classes, following \cite{BGH23,BGG26}.

\parag{Definition and local equations}
Let $\mathfrak E=(E,\phi)$ be a Higgs bundle of rank $r$ on a smooth variety $X$ of dimension $n$, and for $0<d<r$ let $\pi_d\colon\Gr_d(E)\to X$ be the Grassmann bundle of rank $d$ subbundles of $E$, with universal subbundle $S_d\subset\pi^*E$ and universal quotient $Q_d=\pi_d^*E/S_d$. The pullback $\pi_d^*\phi$ induces a morphism $S_d\to\pi_d^*E\otimes\pi_d^*\Omega^1_X\to Q_d\otimes\pi^*\Omega^1_X$, i.e.\ a section of $\mathcal Hom(S_d,Q_d)\otimes\pi^*\Omega^1_X$, and the \emph{Higgs Grassmannian} $\HGr_d(\mathfrak E)\subset\Gr_d(E)$ is its zero scheme. Its points over $x\in X$ are the $d$-dimensional subspaces $W\subset E_x$ with $\phi_x(W)\subset W\otimes\Omega^1_{X,x}$, i.e.\ invariant under all the contractions $\phi_{t}$, $t\in T_xX$; it is the natural parameter space of the rank $d$ Higgs subbundles of $\mathfrak E$, and equally of the rank $s=r-d$ Higgs quotients, a quotient $E\to M$ being a Higgs quotient exactly when its kernel is $\phi$-invariant. In a local frame $e_1,\dots,e_r$ of $E$, with $\phi(e_j)=\sum_i\phi_{ij}\otimes e_i$ and $\phi_{ij}$ local $1$-forms, and with vertical homogeneous coordinates $(z_1,\dots,z_r)$ on $\Ps E=\Gr_1(E)$, the Higgs Grassmannian of lines is cut out by the $1$-form valued equations
\begin{equation}\label{eq:hgrsub}
\sum_{k=1}^rz_k\bigl(\phi_{jk}z_i-\phi_{ik}z_j\bigr)=0,\qquad 1\le i<j\le r,
\end{equation}
which express $v\wedge\phi(v)=0$ for $v=\sum z_ke_k$ \cite[eq.\ (4)]{BGG26}; in local coordinates on $X$, each equation   splits into $n$ scalar ones. For rank one quotients, parameterized by $\Ps(E^\vee)=\Gr_{r-1}(E)$, the equations are obtained by transposing the Higgs field,
% \begin{equation}\label{eq:hgrquot}
% \sum_{k=1}^rw_k\bigl(\phi_{kj}w_i-\phi_{ki}w_j\bigr)=0,
% \end{equation}
and the identification $\HGr_{r-1}(\mathfrak E)=\HGr_1(\mathfrak E^\vee)$ with the Higgs Grassmannian of lines of the dual Higgs bundle $\mathfrak E^\vee=(E^\vee,\phi^t)$ is scheme-theoretic. For intermediate $d$ one uses the Pl\"ucker embedding, as in \cite[eq.\ (5)]{BGG26}.

The scheme structure may not  be reduced, and the Higgs Grassmannian is in general far from being equidimensional. Both phenomena are visible in the simplest examples \cite{BGG26}.

\parag{Tautological classes and curve semistability}
Let $\pi_s\colon\Gr_{r-s}(E)\to X$, for $0<s<r$,  be the Grassmann bundle of rank $s$ quotients, $Q_s$ the universal rank $s$ quotient, and
\begin{equation}\label{eq:theta}
\theta_s(E)=c_1(Q_s)-\frac sr\,\pi_s^*c_1(E)\ \in N^1(\Gr_{r-s}(E))_{\mathbb Q},
\end{equation}
the \emph{normalized tautological class}. Its restriction to the scheme of rank $s$ Higgs quotients is the class denoted $\theta_s(\mathfrak E)$ in \cite{BHR06}. For an ordinary vector bundle the nefness of $\theta_s(E)$ on the whole Grassmann bundle, for one and hence for all $s$, is equivalent to curve semistability \cite{BHR06};  this is a higher dimensional generalization of Miyaoka's criterion for semistability of vector bundles on curves \cite{Mi}.

For Higgs bundles the same mechanism, with Higgs quotients in place of quotients, gives \cite{BHR06,BGO11}:
\begin{center}
\emph{$\mathfrak E$ is curve semistable if and only if $\theta_s(\mathfrak E)$ is nef on $\HGr_{r-s}(\mathfrak E)$ for all $0<s<r$.}
\end{center}
Here the quantifier over all $s$ cannot be dropped: unlike the case of vector bundles, the class $\theta_s(\mathfrak E)$ lives on the possibly very small subscheme $\HGr_{r-s}(\mathfrak E)$ and controls only the Higgs quotients of rank $s$. For instance one can construct rank 3 examples where $\theta_1(\mathfrak E)$
is nef but the Higgs bundle is destabilized by a rank 2 Higgs quotient \cite{BHR06}.

\parag{The theorem of \cite{BGH23}}
The relation between the normalized tautological classes and the discriminant is the following. On a surface, writing $h$ for the relative hyperplane class of $\pi\colon \Ps(E^\vee)\to X$ and $\theta=h-\frac1r\pi^*c_1(E)$, one computes   $\pi_*\theta^{\,r+1}=-\Delta(E)$.  Thus for a curve semistable vector bundle,
$\Delta(E)\le 0$; but curve semistable bundles are semistable, 
so $\Delta(E)\ge 0$ by the Bogomolov inequality, and then 
$\Delta(E) =  0$.\footnote{On a surface we think of $\Delta(E)$ as a number.} This provides a quick proof of the implication
(2) $\Rightarrow$ (1) for   vector bundles (this is part of the theorem about curve semistable vector bundles in 
\cite{Nakayama99,BHR06}). Thus curve semistability for vector bundles is equivalent to the class $\theta$ being nef but not big.

For a Higgs bundle nefness is only available on $\HGr_{r-s}(\mathfrak E)$, and the question becomes what a component of the Higgs Grassmannian says about the top self-intersections of the classes $\theta_s$ in $\Gr_{r-s}(E)$. Let $W\subset\HGr_{r-s}(\mathfrak E)$ be an irreducible component, with its reduced structure, of codimension $1$ in $\Gr_{r-s}(E)$ over a surface, and write its class in $A^1(\Gr_{r-s}(E),\Q)$ by the
Leray--Hirsch theorem for Chow groups \cite[Prop.~14.6.5]{Fulton} as 
\begin{equation}\label{eq:Wclass}
[W]=\beta_0\,\theta_s+\pi_s^*\beta_1 
\end{equation}
with $\beta_i\in A^i(X,\mathbb Q)$, with $i=0,1$; here $\pi_s$ is the projection $\Gr_{r-s}(E)\to X$.
Assume that
$W$ surjects onto $X$.
Then $\beta_0>0$, and 
\begin{equation}\label{eq:thetaW}
\int_W\theta_s(\mathfrak E)^{\dim W}=-c\,\beta_0 \int_X  \Delta(E), \qquad c > 0.
\end{equation}
Reasoning as before, 
one gets $\Delta(E)=0$. For the reader's convenience we give a full statement of this result, and a sketch of the proof.

\begin{prop}\emph{\cite[Thms.~3.3 and 4.9]{BGH23}} \label{prop:BGH}
Let $\mathfrak E$ be a Higgs bundle of rank $r>1$ on a smooth projective variety $X$ of dimension $n\ge 2$, and suppose that for some $s$, $0<s<r$,  the Higgs Grassmannian $\HGr_{r-s}(\mathfrak E)$ has an irreducible component $\Sigma$ which is a divisor in $\Gr_{r-s}(E)$ and dominates $X$. Then the implication (2) $\Rightarrow$ (1) of the Conjecture holds.
\end{prop}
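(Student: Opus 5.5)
We argue first on a surface and then extend to arbitrary dimension. Let $\mathfrak E$ be curve semistable. Since curve semistability implies semistability with respect to every polarization, the content of the statement is $\Delta(E)=0$. The Bogomolov inequality for semistable Higgs bundles \cite{Simpson88,Simpson92} gives $\Delta(E)\cdot H^{n-2}\ge0$ for any ample $H$, so it suffices to prove the reverse inequality. The tool is the nefness of $\theta_s(\mathfrak E)$ on $\HGr_{r-s}(\mathfrak E)$, hence on the reduced component $\Sigma$.

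\emph{Surface case.} Let $n=2$ and put $N=\dim\Gr_{r-s}(E)$, so $\dim\Sigma=N-1$. By Leray--Hirsch write $[\Sigma]=\beta_0\theta_s+\pi_s^*\beta_1$ as in \eqref{eq:Wclass}. Since $\Sigma$ dominates $X$, its restriction to a general fibre $\Gr_{r-s}(\C^r)$ is a nonzero effective divisor, of class $\beta_0$ times the Plücker class; hence $\beta_0>0$. We then compute
\[
\int_\Sigma\theta_s^{N-1}=\beta_0\int_{\Gr}\theta_s^{N}+\int_{\Gr}\theta_s^{N-1}\pi_s^*\beta_1 .
\]
The second term is $\beta_1\cdot\pi_{s*}\theta_s^{N-1}$, and $\pi_{s*}\theta_s^{N-1}$ is a degree-one class on $X$. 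It is a polynomial in the Chern classes of $E$ which is invariant under $E\mapsto E\otimes L$, because $\theta_s$ is twist invariant. The only such linear class is $0$, so this term vanishes. For the first term, the same twist invariance shows that $\pi_{s*}\theta_s^N$ is a degree-two invariant, hence a multiple $-c\,\Delta(E)$. We compute the constant by splitting and the Grothendieck relation; for $s=1$ the paper's computation gives $c=1$. In general $c>0$, which we check via the Schubert calculus degree of the fibre Grassmannian, for instance by taking $E=L\oplus\Oc^{r-1}$ with a suitable $c_2$, or via a splitting-principle formula. This gives \eqref{eq:thetaW}. Nefness of $\theta_s|_\Sigma$ then yields $-c\beta_0\Delta\ge0$, so $\Delta\le0$ and therefore $\Delta=0$.

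\emph{Arbitrary dimension.} Fix a very ample $H$ and let $S\subset X$ be a general smooth complete intersection surface of $n-2$ members of $|mH|$. Curve semistability passes to $\mathfrak E|_S$, since curves in $S$ are curves in $X$, and so does $\Delta(E|_S)=\Delta(E)\cdot (mH)^{n-2}$. The key point is that $\Sigma\cap\pi_s^{-1}(S)$ contains a component which is a divisor in $\Gr_{r-s}(E|_S)$ dominating $S$, and that it lies in $\HGr_{r-s}(\mathfrak E|_S)$. The latter holds because restricting $\phi$ to $S$ only weakens the invariance conditions: $\HGr(\mathfrak E)|_S\subset\HGr(\mathfrak E|_S)$. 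By generality (Bertini) the intersection is irreducible of the right dimension and dominates $S$. The danger is that $\HGr(\mathfrak E|_S)$ may be larger, so the component of $\HGr(\mathfrak E|_S)$ containing $\Sigma|_S$ might not be a divisor. If it is bigger, it is the whole $\Gr_{r-s}(E|_S)$, and then $\theta_s(E|_S)$ is nef on the full Grassmann bundle, which gives $\Delta\le0$ by the vector bundle argument. Otherwise $\Sigma|_S$ is a divisorial component and the surface case applies. Either way $\Delta(E)\cdot H^{n-2}=0$ for all such $H$.

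To pass from this numerical vanishing to $\Delta(E)=0$ in $H^4(X,\Q)$, we use a Hodge index argument with varying polarizations. It runs as in \cite{BGH23}, using that $\Delta\cdot H_1\cdots H_{n-2}\ge0$ for all ample $H_i$ by Bogomolov with mixed polarizations, combined with the vanishing just obtained.

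The main obstacle is this last step, together with verifying $c>0$ for general $s$. I would first try to obtain both directly from \cite[Thm.~4.9]{BGH23}.
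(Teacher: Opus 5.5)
\textbf{The gap is your last step.} Everything before it gives only $\Delta(E)\cdot H^{n-2}=0$ for every ample $H$. By multilinearity this means $\Delta(E)\cdot D_1\cdots D_{n-2}=0$ for all divisor classes $D_i$. Bogomolov inequalities for mixed polarizations only add sign information about these same numbers.

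No Hodge index argument can turn this into $\Delta(E)=0$ in $H^4(X,\Q)$ when $n\ge4$. A degree-four class, even an algebraic one, can be orthogonal to all products of $n-2$ divisors without vanishing. For example, take a smooth cubic fourfold containing a plane $P$, where $H^2(X,\Q)=\Q h$. The class $P-\frac13h^2$ satisfies $(P-\frac13h^2)\cdot h^2=0$, but its self-intersection is $\frac83$. (For $n=3$ your idea would work, via hard Lefschetz, the Lefschetz $(1,1)$ theorem and the Hodge index theorem on $N^1(X)$, but not in general.)

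The missing ingredient is non-numerical: Simpson's theorem \cite[Thm.~2]{Simpson92}. It says that an $H$-semistable Higgs bundle with $c_1=0$ and $\mathrm{ch}_2\cdot H^{n-2}=0$ has a filtration whose quotients are stable Higgs bundles with vanishing Chern classes. The paper applies it to $\mathrm{End}(\mathfrak E)=\mathfrak E\otimes\mathfrak E^\vee$, which is $H$-semistable, has $c_1=0$, and has $\mathrm{ch}_2(\mathrm{End}\,E)=-2r\,\Delta(E)$. Your numerical vanishing is exactly the hypothesis. The conclusion $\mathrm{ch}_2(\mathrm{End}\,E)=0$ is precisely $\Delta(E)=0$ in $H^4(X,\Q)$.

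The rest of your outline is sound and follows the paper's Steps 1--2. Two small remarks.

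\emph{The case distinction in the restriction step is unnecessary.} Nefness of $\theta_s$ on $\Sigma$ already gives $\int_\Sigma\theta_s^{k+1}\,\pi_s^*H^{n-2}\ge0$, with $k=s(r-s)$. Your Leray--Hirsch computation evaluates this directly on $X$ as $-c\,\beta_0\,\Delta(E)\cdot H^{n-2}$. So you never need to know whether $\Sigma\cap\pi_s^{-1}(S)$ is a component of $\HGr_{r-s}(\mathfrak E|_S)$.

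\emph{Positivity of $c$ is a finite computation.} Let $R=((r-s)^s)$. The Gysin formula $\pi_{s*}s_\lambda(Q_s)=s_{\lambda-R}(E)$ gives $c=f^{R+(2)}-f^{R+(1,1)}$, with the second term read as $0$ for $s=1$. The hook length formula turns this into $\frac{k(k+1)(k+2)}{(r-1)r(r+1)}f^R>0$, which is the constant in \eqref{eq:thetacube}.
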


\begin{proof} 
Curve semistability implies semistability with respect to every polarization and the nefness of $\theta_s(\mathfrak  E)$, hence of its restriction to $\Sigma$. Let $H$ be an ample class on $X$.

\emph{Step 1: $n=2$.} 
It was proved in \cite{BGH23} that in this case $\beta_0=1$. Moreover, 
set $m=s(r-s)+1=\dim\Sigma \ge 2$.
% and let $f^\varepsilon$ be the number of standard Young tableaux of shape $\varepsilon=((r-s)^s)$, i.e.\ $\varepsilon=(2)$ for $s=1$ and $\varepsilon=(1,1)$ for $s=2$; in both cases $f^\varepsilon=1$. 
The computation in  \cite[Thm.~3.3]{BGH23} yields.
\begin{equation}\label{eq:thetacube}
\int_\Sigma\theta_s(\mathfrak  E)^m=-\frac{m(m+1)(m-1)}{r(r+1)(r-1)}\,f^{\varepsilon}\,\Delta(E)=-c\,\Delta(E)
\end{equation}
where $f^{\varepsilon}$ and $c$ are   positive coefficients, the former related to some Young tableaux combinatorics.
By the nefness of the class $\theta_s(\mathfrak  E)$ one has
$\Delta(E) \le 0$, while $\Delta(E)\ge 0$ since $\mathfrak E$ is semistable hence satisfies the Bogomolov inequality, so that $\Delta(E)=0$.

\emph{Step 2: $\Delta(E)\cdot H^{n-2}=0$ for $n>2$.} Let $Y\subset X$ be the intersection of $n-2$ generic divisors in $|mH|$, $m\gg0$, a smooth connected projective surface. The restriction $\mathfrak  E|_Y$ is a curve semistable Higgs bundle, and $\Sigma|_Y$ is a divisorial component of a Higgs Grassmannian of $\mathfrak  E|_Y$. By Step 1, $\Delta(E|_Y)=0$, i.e., $\Delta(E)\cdot H^{n-2}=0$.

\emph{Step 3.} Consider the Higgs bundle $\mathrm{End}(\mathfrak  E)=\mathfrak  E\otimes\mathfrak  E^\vee$. It is $H$-semistable because $\mathfrak  E$ is \cite{Simpson92}, it has $c_1(\mathrm{End}\,E)=0$, and
$$
\mathrm{ch}_2(\mathrm{End}\,E)=2r\,\mathrm{ch}_2(E)-c_1(E)^2=-2r\,\Delta(E),
$$
so that $\mathrm{ch}_2(\mathrm{End}\,E)\cdot H^{n-2}=0$. By Simpson's theorem \cite[Thm.~2]{Simpson92}, $\mathrm{End}(\mathfrak  E)$ admits a filtration whose quotients  are stable Higgs bundles with vanishing Chern classes; hence all Chern classes of $\mathrm{End}(E)$ vanish, in particular $\mathrm{ch}_2(\mathrm{End}\,E)=0$, i.e., $\Delta(E)=0$ in $H^4(X,\mathbb Q)$.  
\end{proof}

So  the nef class $\theta_s(\mathfrak  E)$ has zero top self-intersection  on $\Sigma$, as $\Delta(E)=0$: it is nef but not big --- as it happens on the whole Grassmann bundle $\grass{r-s}{E}$ in the vector bundle case.

\section{Systems of type (2,1) and (1,2) on a surface}
According to Simpson \cite{Simpson92} a \emph{system of Hodge bundles} is a direct sum of vector bundles $E=\bigoplus_{r,s} E_{r,s}$ with a Higgs field $\phi\colon E_{r,s} \to E_{r-1,s+1}\otimes\Omega_X^1$. As a preliminary for proving our result on nilpotent rank three Higgs bundles, we consider systems of Hodge bundles of a certain restricted type.

In this section $X$ is a smooth surface ($n=2$), $r=\rk E =3$, and $E=L\oplus F$ with $\rk L=1$, $\rk F=2$. Concerning the action of the Higgs field $\phi$ there are two cases:
\begin{itemize} \itemsep=4pt
\item[\textbf{(2,1)}] $\phi \colon F\to L\otimes\Omega^1_X$, $\phi |_L=0$; 
\item[\textbf{(1,2)}] $\phi \colon L\to F\otimes\Omega^1_X$, $\phi |_F=0$.
\end{itemize}
We choose a frame $e_1\in L$, $e_2,e_3\in F$ and in local coordinates $(x_1, x_2)$  on $X$ write the two $1$-forms
$$
a=a_1dx_1+a_2dx_2,\qquad b=b_1dx_1+b_2dx_2
$$
that determine $\phi $: in case (2,1) $\phi (e_2)=a\otimes e_1$, $\phi (e_3)=b\otimes e_1$, and in case (1,2) $\phi (e_1)=a\otimes e_2+b\otimes e_3$. Thus
\begin{equation}\label{eq:matrices}
\phi _{(2,1)}=\begin{pmatrix}0&a&b\\0&0&0\\0&0&0\end{pmatrix},\qquad
\phi _{(1,2)}=\begin{pmatrix}0&0&0\\a&0&0\\b&0&0\end{pmatrix}=\phi _{(2,1)}^{t}.
\end{equation} 
In both cases the commutativity condition $\phi \wedge\phi =0$ is automatic. In case (2,1), $(L,0)$ is a Higgs subbundle and $(F,0)$ a Higgs quotient; in case (1,2) it is the other way round.

A basic invariant is the $2$-form
\begin{equation}\label{eq:delta}
a\wedge b=(a_1b_2-a_2b_1)\,dx_1\wedge dx_2 ,
\end{equation}
which is the local expression of the determinant $\delta$ of $\phi $ regarded as a morphism between rank two bundles: $\delta\in H^0(X,L^{2}\otimes K_X\otimes\det F^{-1})$ in case (2,1) and $\delta\in H^0(X,\det F\otimes K_X\otimes L^{-2})$ in case (1,2) --- here $K_X$ is the canonical bundle of $X$.\footnote{The morphisms are $F\to L\otimes\Omega^1_X$ in the (2,1) case, and $L\otimes T_X\to F$ in the (1,2) case.}
We write $D=\{\delta=0\}$ for the degeneracy divisor and $Z=\{\phi =0\}\subset D$ for the vanishing locus of $\phi $. %Note that $a\wedge b(x)=0$ if and only if the matrices $\phi ^{(1)}(x),\phi ^{(2)}(x)$ are proportional, i.e.\ $\phi (x)=\omega\otimes N$ for a $1$-form $\omega$ and a rank $\le1$ endomorphism $N$.

\parag{Case (2,1): rank one Higgs quotients}\label{sec:A1}

\begin{prop}\label{prop:A1} 
Let $\mathfrak  E$ be of type   \textup{(2,1)}. In the vertical homogeneous coordinates $(w_1,w_2,w_3)$ on $\Ps(E^\vee)$ the ideal of the Higgs Grassmannian $\HGr_2(\mathfrak  E)$ of rank one Higgs quotients inside $\Gr_2(E)$ is
\begin{multline}\label{eq:IA1}
I=\bigl(a_1w_1^2,\ a_2w_1^2,\ b_1w_1^2,\ b_2w_1^2,\ w_1\ell_1,\ w_1\ell_2\bigr) =  \\
 w_1\cdot\bigl(a_hw_1,\ b_hw_1,\ \ell_1,\ \ell_2\bigr)_{h=1,2},\quad \text{with}\quad \ell_h:=b_hw_2-a_hw_3 .
\end{multline}
\end{prop}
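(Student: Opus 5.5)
We use the identification $\HGr_2(\mathfrak E)=\HGr_1(\mathfrak E^\vee)$ recalled in Section~\ref{sec:hgr}. The dual Higgs bundle has Higgs field $\phi^t$, and by \eqref{eq:matrices} this is $\phi_{(1,2)}$. So it suffices to write the equations \eqref{eq:hgrsub} for the Higgs Grassmannian of lines of $\phi_{(1,2)}$, in the coordinates $(w_1,w_2,w_3)$ of $\Ps(E^\vee)$. Each equation will then be split into its $dx_1$ and $dx_2$ components.

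The computation is direct. For $v=w_1e_1^\vee+w_2e_2^\vee+w_3e_3^\vee$, the only nonzero entries of the transposed field are $\phi_{21}=a$ and $\phi_{31}=b$. Hence
\[
\phi(v)=w_1(a\otimes e_2^\vee+b\otimes e_3^\vee).
\]
The condition $v\wedge\phi(v)=0$ has three Plücker components:
\begin{itemize}
\item the $(1,2)$ component is $w_1\cdot w_1a$;
\item the $(1,3)$ component is $w_1\cdot w_1b$;
\item the $(2,3)$ component is $w_1(w_2b-w_3a)$.
\end{itemize}
Signs are irrelevant for generating an ideal. Equivalently, we substitute into \eqref{eq:hgrsub}: only the $k=1$ terms survive, and they give $z_1(\phi_{j1}z_i-\phi_{i1}z_j)$ for each pair $i<j$.

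Splitting $a=a_1dx_1+a_2dx_2$ and $b=b_1dx_1+b_2dx_2$ into components then gives exactly the six generators
\[
a_hw_1^2,\quad b_hw_1^2,\quad w_1\ell_h\qquad (h=1,2),
\]
with $\ell_h=b_hw_2-a_hw_3$. Factoring out $w_1$ gives the second expression in \eqref{eq:IA1}.

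The only point requiring care is the scheme-theoretic justification. We must check that the zero scheme of the section of $\mathcal Hom(S,Q)\otimes\pi^*\Omega^1_X$ on $\Gr_2(E)=\Ps(E^\vee)$ is generated, in a local affine chart, by the $2\times2$ minors $v\wedge\phi(v)$. We take this from \cite{BGG26} as recalled in Section~\ref{sec:hgr}, together with the statement there that the identification with $\HGr_1(\mathfrak E^\vee)$ holds scheme-theoretically.

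If we wanted to verify it directly, we would proceed in the chart $w_1\neq0$ and in the chart $w_2\neq0$ (the chart $w_3\neq0$ is symmetric). In each chart we write the section $S\to Q\otimes\Omega^1$ explicitly and compare its components with the minors above. This check is routine, so we do not expect a genuine obstacle. The main risk is a transposition or sign slip, namely confusing quotients with subbundles and thereby using $\phi_{(2,1)}$ instead of its transpose. We guard against this by noting that the resulting ideal contains $w_1$-divisibility, as expected: the Higgs quotient $E\to L$ has $\phi$-invariant kernel $F$ only when $\phi=0$, whereas every quotient killing $e_1$ (that is, $w_1=0$) is a Higgs quotient because $L\subset\ker\phi$ is invariant.
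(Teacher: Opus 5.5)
Your proof is correct and follows the paper's route: transpose the Higgs field to get $\phi^t w=(0,\,aw_1,\,bw_1)$, write the three Pl\"ucker components $w_i(\phi^tw)_j-w_j(\phi^tw)_i$ of \eqref{eq:hgrsub} for the dual, and split into $dx_1,dx_2$ components, taking the scheme structure from Section~\ref{sec:hgr}. One small correction to your closing sanity check: the planes $W\supset L$ are $\phi$-invariant because $\Im\phi\subset L\otimes\Omega^1_X$ (dually, because $F^\vee\subset\ker\phi^t$), not because $L\subset\ker\phi$.
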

\begin{proof}
From equation \eqref{eq:matrices}, $\phi ^tw=(0,\ a\,w_1,\ b\,w_1)$, and the three equations $w_i(\phi ^tw)_j-w_j(\phi ^tw)_i=0$ of the transpose of equations  \eqref{eq:hgrsub} are
$$
(1,2):\ a\,w_1^2=0,\qquad (1,3):\ b\,w_1^2=0,\qquad (2,3):\ w_1\,(b\,w_2-a\,w_3)=0 .
$$
Splitting each $1$-form valued equation into its $dx_1$ and $dx_2$ components gives \eqref{eq:IA1}.
\end{proof}

Globally, $\HGr_2(\mathfrak  E)$ is the $\Ps^1$-bundle $\Ps(F^\vee)\simeq\Ps(F)$ of rank one quotients of $F=E/L$, with an embedded component supported on a section over the degeneracy divisor $D$, and with $\Ps^2$ fibres over $Z$. In particular, $\HGr_2(\mathfrak  E)_{\rm red}$ has a component isomorphic to $\Ps(F)$.

\parag{Case (1,2): rank one Higgs quotients}\label{sec:B1}

\begin{prop}\label{prop:B1}
Let $\mathfrak  E$ be of type   \textup{(1,2)}. The ideal of the Higgs Grassmannian $\HGr_2(\mathfrak  E)\subset\Ps(E^\vee)$ of rank one Higgs quotients is
\begin{equation}\label{eq:IB1}
I=\bigl(w_2\lambda_1,\ w_2\lambda_2,\ w_3\lambda_1,\ w_3\lambda_2\bigr)=(w_2,w_3)\cdot(\lambda_1,\lambda_2),\qquad \lambda_h:=a_hw_2+b_hw_3 .
\end{equation}
\end{prop}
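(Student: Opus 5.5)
The plan is to mirror the proof of Proposition~\ref{prop:A1}. I would use the description of rank one Higgs quotients as the Higgs Grassmannian of lines of the dual Higgs bundle $\mathfrak E^\vee=(E^\vee,\phi^t)$. Concretely, I would take the transposed form of equations \eqref{eq:hgrsub}: in the coordinates $w=(w_1,w_2,w_3)$ on $\Ps(E^\vee)$ the equations read $w_i(\phi^tw)_j-w_j(\phi^tw)_i=0$ for $1\le i<j\le 3$. Since in case (1,2) we have $\phi=\phi_{(2,1)}^t$ by \eqref{eq:matrices}, the transposed field is $\phi^t=\phi_{(2,1)}$, the matrix with first row $(0,a,b)$ and zeros elsewhere.

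Step one is to compute the vector $\phi^tw=(a\,w_2+b\,w_3,\,0,\,0)=(\lambda,0,0)$, where $\lambda:=a\,w_2+b\,w_3$ is a $1$-form valued linear form. Step two is to write out the three $1$-form valued equations:
\[
(1,2):\ -w_2\lambda=0,\qquad (1,3):\ -w_3\lambda=0,\qquad (2,3):\ w_2\cdot 0-w_3\cdot 0=0 .
\]
The third equation is trivially satisfied. Up to sign, the ideal is therefore generated by the $1$-form valued elements $w_2\lambda$ and $w_3\lambda$.

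Step three is to split each $1$-form equation into its $dx_1$ and $dx_2$ components. Since $\lambda=\lambda_1dx_1+\lambda_2dx_2$ with $\lambda_h=a_hw_2+b_hw_3$, this yields exactly the four generators $w_2\lambda_1$, $w_2\lambda_2$, $w_3\lambda_1$, $w_3\lambda_2$. These are precisely the products of the generators of $(w_2,w_3)$ with those of $(\lambda_1,\lambda_2)$, so the ideal equals the product ideal $(w_2,w_3)\cdot(\lambda_1,\lambda_2)$, as claimed in \eqref{eq:IB1}.

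There is no real obstacle. The only point needing care is the sign and index convention when transposing \eqref{eq:hgrsub}: one must check that the rows of $\phi^t$ are indexed correctly, so that $(\phi^tw)_1=\sum_k\phi_{k1}w_k$ picks up $a$ and $b$ from the entries $\phi_{21}=a$ and $\phi_{31}=b$. One must also justify that the identification $\HGr_2(\mathfrak E)=\HGr_1(\mathfrak E^\vee)$ is scheme-theoretic, which was stated earlier in the paper, so that the resulting ideal is the full ideal and not just its radical.
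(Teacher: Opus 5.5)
Your proposal is correct and follows the paper's proof: compute $\phi^t w=(a\,w_2+b\,w_3,0,0)$, write out the transposed equations \eqref{eq:hgrsub} (the $(2,3)$ one is vacuous), and split $-w_2\lambda$, $-w_3\lambda$ into their $dx_1$, $dx_2$ components to obtain the four generators in \eqref{eq:IB1}. Your two points of care, the index convention for $\phi^t$ and the scheme-theoretic identification $\HGr_2(\mathfrak E)=\HGr_1(\mathfrak E^\vee)$, are what the paper takes for granted from Section~\ref{sec:hgr}.
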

\begin{proof}
From  equation \eqref{eq:matrices}, $\phi ^tw=(a\,w_2+b\,w_3,\ 0,\ 0)$, so the transpose of equations \eqref{eq:hgrsub} reads
$$
(1,2):\ -w_2\,(a\,w_2+b\,w_3)=0,\qquad (1,3):\ -w_3\,(a\,w_2+b\,w_3)=0,\qquad (2,3):\ 0=0 .
$$
Splitting into components one gets \eqref{eq:IB1}.
\end{proof}

Globally, over $X\setminus D$ the scheme $\HGr_2(\mathfrak  E)$ is the first infinitesimal neighbourhood of the section $\sigma_L\colon X\to\Ps(E^\vee)$ corresponding to the quotient $E\to L$, so that $\HGr_2(\mathfrak  E)_{\rm red}\simeq X$ and the fibres are zero-dimensional. Along $D$ the fibres jump to $\Ps^1$'s (the quotients of $E/\Im\phi $), with an embedded point at the section, and over $Z$ to $\Ps^2$'s. This Grassmannian has no dominant divisorial component.

\parag{Case (1,2): rank two Higgs quotients}\label{sec:B2}
Rank two quotients $E\to M$ correspond to their kernels, which are rank one subbundles; the relevant scheme is therefore $\HGr_1(\mathfrak  E)\subset\Ps E$, given by equation \eqref{eq:hgrsub} in the coordinates $(z_1,z_2,z_3)$, $v=\sum z_ke_k$.

\begin{prop}\label{prop:B2}
Let $\mathfrak  E$ be of type \textup{(1,2)}. The ideal of $\HGr_1(\mathfrak  E)\subset\Ps E$, the Higgs Grassmannian of rank two Higgs quotients, is
\begin{multline}\label{eq:IB2}
I=\bigl(a_1z_1^2,\ a_2z_1^2,\ b_1z_1^2,\ b_2z_1^2,\ z_1\ell_1,\ z_1\ell_2\bigr)= \\ z_1\cdot\bigl(a_hz_1,\ b_hz_1,\ \ell_1,\ \ell_2\bigr)_{h=1,2},\quad  \text{where}\quad \ell_h:=b_hz_2-a_hz_3 .
\end{multline}
\end{prop}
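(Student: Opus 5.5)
This is a direct computation from the local equations \eqref{eq:hgrsub} for the Higgs Grassmannian of lines, following the proofs of Propositions~\ref{prop:A1} and~\ref{prop:B1}. In the frame $e_1,e_2,e_3$ with $v=z_1e_1+z_2e_2+z_3e_3$, the matrix $\phi_{(1,2)}$ of \eqref{eq:matrices} gives
\[
\phi(v)=z_1\,(a\otimes e_2+b\otimes e_3),
\]
that is, $(\phi v)_1=0$, $(\phi v)_2=a\,z_1$, $(\phi v)_3=b\,z_1$. The condition $v\wedge\phi(v)=0$ consists of the three $1$-form valued equations $z_i(\phi v)_j-z_j(\phi v)_i=0$ for $i<j$. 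These read
\[
(1,2):\ a\,z_1^2=0,\qquad (1,3):\ b\,z_1^2=0,\qquad (2,3):\ z_1\,(b\,z_2-a\,z_3)=0 .
\]

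Splitting each equation into its $dx_1$ and $dx_2$ components gives exactly the six generators $a_hz_1^2$, $b_hz_1^2$, $z_1\ell_h$, with $\ell_h=b_hz_2-a_hz_3$. The factored form $z_1\cdot(a_hz_1,b_hz_1,\ell_1,\ell_2)$ is then immediate.

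A shortcut is available. Since $\phi_{(1,2)}=\phi_{(2,1)}^t$, and $\HGr_1(\mathfrak E)$ is identified scheme-theoretically with $\HGr_2$ of the dual Higgs bundle, the computation is literally that of Proposition~\ref{prop:A1} with the $w$'s renamed to $z$'s. The only care needed is the sign convention in \eqref{eq:hgrsub}, which affects only the signs of the generators and not the ideal. This sign bookkeeping is the only point that could go wrong, and the sole "obstacle" is checking that it does not change the ideal.
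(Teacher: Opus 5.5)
Your proposal is correct and is the paper's proof: compute $\phi v=(0,\,a z_1,\,b z_1)$ from \eqref{eq:matrices}, write out the three equations $z_i(\phi v)_j-z_j(\phi v)_i=0$, and split them into their $dx_1$ and $dx_2$ components. Your remark that this is the computation of Proposition~\ref{prop:A1} with $z$ in place of $w$ is the same observation the paper records right after the proof.
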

\begin{proof}
From \eqref{eq:matrices}, $\phi  v=(0,\ a\,z_1,\ b\,z_1)$, so the equations $z_i(\phi  v)_j-z_j(\phi  v)_i=0$ are
$$
(1,2):\ a\,z_1^2=0,\qquad (1,3):\ b\,z_1^2=0,\qquad (2,3):\ z_1\,(b\,z_2-a\,z_3)=0 ,
$$
and one splits into components.
\end{proof}

The ideal \eqref{eq:IB2} is actually the ideal \eqref{eq:IA1} of the rank one quotients of case (2,1), with $z$ in place of $w$. 

Globally, over $X\setminus D$ the scheme $\HGr_1(\mathfrak  E)$ is the $\Ps^1$-bundle $\Ps(F)$; along $D$ it acquires an embedded section (the line bundle $\Im\phi \subset F$); over $Z$ the fibres are $\Ps^2$. The rank two Higgs quotients are therefore $E\to E/K$ with $K\subset F$ a line subbundle; here $(F,0)$ is a Higgs subbundle of $\mathfrak  E$, and $E/K$ inherits the Higgs field $\bar e_1\mapsto[a\otimes e_2+b\otimes e_3]\in F/K\otimes\Omega^1_X$.

The fourth case, the rank two Higgs quotients of a system of type (2,1), is obtained in the same way: since $\phi_{(2,1)}=\phi_{(1,2)}^t$, equations \eqref{eq:hgrsub} for $\phi_{(2,1)}$ are the equations of Proposition~\ref{prop:B1} with $z$ in place of $w$, so the ideal of $\HGr_1(\mathfrak E)\subset\Ps E$ is $(z_2,z_3)\cdot(a_hz_2+b_hz_3)_{h=1,2}$. Hence over $X\setminus D$ the scheme $\HGr_1(\mathfrak E)$ is the first infinitesimal neighbourhood of the section $\Ps(L)\subset\Ps E$ given by the Higgs subbundle $(L,0)$, with fibres jumping to $\Ps^1$ along $D$ and to $\Ps^2$ over $Z$; its only component dominating $X$ is a section, of codimension two in $\Ps E$. This case provides no dominant divisorial component.

\begin{remark}\label{rem:dimsup}
\begin{enumerate}
% \item In case (1,2) the asymmetry between rank one and rank two quotients is striking: rank one Higgs quotients are scarce (generically the single quotient $E\to L$, carrying a nonreduced structure of length three), whereas rank two Higgs quotients form a whole, generically reduced, $\Ps^1$-bundle. This is the quotient-side counterpart of the dichotomy between (i) and (ii) in \cite[Prop.\ 6.1]{BGG26}, and case (2,1) inverts the picture.
\item The nonreduced structures  are deformation that are Higgs at first order, because the obstruction $v\wedge\phi (v)$ (or $w\wedge\phi ^tw$) is quadratic in the deformation parameter. 

\item Nothing in the computations uses that $X$ is a surface except that the number of $1$-form components of $\phi$  is two; for $\dim X=n$ and $F$ of rank two one has $2n$ generators of the same shape, and the role of $a\wedge b\neq0$ is played by the condition that the vectors $(a_h,b_h)_{h=1,\dots,n}$ span $\C^2$ at every point $x\in X$.
\end{enumerate}
\end{remark}

\section{The conjecture for systems of type (1,2) and (2,1)}
\label{sec:conjecture}\label{part:conj}

In this section $X$ is a smooth projective variety of dimension $n\ge 2$, and $\mathfrak  E=(L\oplus F,\phi )$ is a system of type $(1,2)$ or $(2,1)$, with $\phi \neq0$. We use the divisorial components of the Higgs Grassmannians of quotients  to settle, for these systems, the conjecture on curve semistable Higgs bundles.

Section~\ref{sec:A1} exhibits, in each of the two cases, a $\Ps^1$-bundle of Higgs quotients:
\begin{equation}\label{eq:Sigma}
\begin{aligned}
& \text{(2,1)}\quad s=1,\quad \Sigma=\{w_1=0\}=\Ps(F^\vee)\subset\Ps(E^\vee)=\Gr_2(E);
\\
& \text{(1,2)}\quad s=2,\quad \Sigma=\{z_1=0\}=\Ps(F)\subset\Ps E=\Gr_1(E).
\end{aligned}
\end{equation}
In case (2,1) the points of $\Sigma$ are the rank one quotients of $E$ factoring through $E\to E/L=F$; in case (1,2) they are the rank two quotients $E\to E/K$ with $K\subset F$ a line subbundle.

\begin{lemma}\label{lem:Sigma}
In both cases:
\begin{enumerate}
\item $\Sigma$ is a $\Ps^1$-bundle over $X$, hence integral and smooth of dimension $n+1$, it is a divisor in the $(n+2)$-dimensional variety $\Gr_{r-s}(E)$, and $\pi _s|_\Sigma\colon\Sigma\to X$ is surjective.
\item $\Sigma$ is an irreducible component of $\HGr_{r-s}(\mathfrak  E)$, and in fact $$\Sigma=\overline{\HGr_{r-s}(\mathfrak  E)_{\rm red}\cap\pi _s^{-1}(X\setminus Z)};$$ the other irreducible components of $\HGr_{r-s}(\mathfrak  E)$ lie over the vanishing locus $Z=\{\phi =0\}$ and do not dominate $X$.
\item Writing $\chi=c_1(Q_s)$, the class of $\Sigma \in A^1(\Gr_{r-s}(E), \Q)$ is
$$
[\Sigma]=\chi-\pi _s^*c_1(L)\ \ \text{in case (2,1)},\qquad
[\Sigma]=\chi-\pi _s^*c_1(F)\ \ \text{in case (1,2)} .
$$

\end{enumerate}
\end{lemma}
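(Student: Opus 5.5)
The plan is to read all three assertions off the explicit ideals in Propositions~\ref{prop:A1} and~\ref{prop:B2}. These have the same shape, $I=w_1\cdot J$ in case (2,1) and the same ideal with $z$ in place of $w$ in case (1,2). By Remark~\ref{rem:dimsup}(2) they remain valid in any dimension $n$, with $2n$ generators $a_hw_1,\ b_hw_1,\ \ell_h=b_hw_2-a_hw_3$ for $h=1,\dots,n$. I treat case (2,1) and indicate the changes for (1,2).

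For (1) and (3), I describe $\Sigma$ intrinsically as the zero scheme of a section of a line bundle.

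In case (2,1), a point of $\Ps(E^\vee)$ is a quotient $E_x\to Q_{1,x}$, and $w_1=0$ says that $L_x$ maps to zero. So $\Sigma$ is the zero scheme of the composite $\pi_1^*L\to\pi_1^*E\to Q_1$. This is a section of $Q_1\otimes\pi_1^*L^{-1}$, which restricted to each fibre $\Ps^2$ is a nonzero linear form. Hence its zero scheme is reduced and smooth, it is the $\Ps^1$-bundle $\Ps(F^\vee)$ of quotients of $E/L=F$, and its class is $\chi-\pi_1^*c_1(L)$.

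In case (1,2), $\Sigma\subset\Ps E$ is the zero scheme of the composite $S_1\to\pi^*E\to\pi^*L$. This is a section of $S_1^{-1}\otimes\pi^*L$ that is again fibrewise a nonzero linear form, with zero locus $\Ps(F)$. Since $Q_2=\pi^*E/S_1$ we have $c_1(S_1)=\pi^*c_1(E)-\chi$, so
\[
[\Sigma]=\pi^*c_1(L)-\pi^*c_1(E)+\chi=\chi-\pi^*c_1(F).
\]
In both cases a $\Ps^1$-bundle over $X$ is smooth and irreducible of dimension $n+1$ and surjects onto $X$. The ambient Grassmann bundle has dimension $n+2$, so $\Sigma$ is a divisor.

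For (2), the inclusion $I=w_1J\subset(w_1)$ gives $\Sigma\subset\HGr_{r-s}(\mathfrak E)$ scheme-theoretically. Next I compute the fibre of $V(I)$ over a point $x\notin Z$. There some $a_h(x)$ or $b_h(x)$ is nonzero, so $J_x$ contains a nonzero multiple of $w_1$ together with the nonzero linear form $\ell_h$ in $w_2,w_3$. Hence $V(J_x)$ is a single point, and it lies on $\{w_1=0\}$. Therefore
\[
V(I)_x=V(w_1)\cup V(J_x)=\Sigma_x ,
\]
that is, $\HGr_{r-s}(\mathfrak E)_{\rm red}\cap\pi_s^{-1}(X\setminus Z)=\Sigma\cap\pi_s^{-1}(X\setminus Z)$.

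Since $\phi\neq0$, $Z$ is a proper closed subset, so $\Sigma\cap\pi_s^{-1}(X\setminus Z)$ is a nonempty open subset of the irreducible $\Sigma$, and its closure is $\Sigma$. Now let $T$ be an irreducible component of $\HGr_{r-s}(\mathfrak E)$.
\begin{itemize}
\item If $T$ meets $\pi_s^{-1}(X\setminus Z)$, then a dense open subset of $T$ lies in $\Sigma$, so $T\subset\Sigma$. Maximality of $T$ then forces $T=\Sigma$.
\item Otherwise $T\subset\pi_s^{-1}(Z)$, so $T$ does not dominate $X$.
\end{itemize}
The same argument shows that $\Sigma$ is not properly contained in any irreducible closed subset of $\HGr_{r-s}(\mathfrak E)$, so it is a component.

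I expect no serious obstacle. The only points requiring care are checking that the fibrewise point $V(J_x)$ really lies on $\Sigma$, and not forgetting the embedded structure, which lives in $J$ and does not affect reduced components. One must also use the general-$n$ form of the ideals from Remark~\ref{rem:dimsup}, where $x\notin Z$ still means that some coefficient $a_h(x)$ or $b_h(x)$ is nonzero.
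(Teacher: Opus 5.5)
Your proposal is correct and follows the paper's own proof. For (i) and (iii) you realize $\Sigma$ as the zero scheme of the natural section of $Q_1\otimes\pi_s^*L^{-1}$, resp.\ $\Oc_{\Ps E}(1)\otimes\pi_s^*L$; for (ii) you compare fibres over $X\setminus Z$ using the ideals of Propositions~\ref{prop:A1} and~\ref{prop:B2}, and you spell out the component argument in more detail than the paper does. One harmless slip: $V(J_x)$ is \emph{at most} a point (it is empty when the vectors $(a_h(x),b_h(x))$ span $\C^2$), but all you actually use is $V(J_x)\subset\{w_1=0\}$, which holds.
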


\begin{proof}
(i) is clear from equation \eqref{eq:Sigma}.

(ii) By Propositions \ref{prop:A1} and \ref{prop:B2} (and Remark~\ref{rem:dimsup} (ii) for $n>2$), over every point of $X\setminus Z$ the fibre of $\HGr_{r-s}(\mathfrak  E)_{\rm red}$ is exactly the fibre of $\Sigma$, namely the line $\{w_1=0\}$, resp.\ $\{z_1=0\}$, in the fibre $\Ps^2$ of a Grassmann bundle of $E$. Hence $\HGr_{r-s}(\mathfrak  E)_{\rm red}\cap\pi _s^{-1}(X\setminus Z)=\Sigma\cap\pi _s^{-1}(X\setminus Z)$. Since $\phi \ne0$, $X\setminus Z$ is a dense open subset of $X$, hence the claim.

(iii) In case (1,2), $\Sigma\subset\Ps E$ is the zero locus of the composition $\Oc_{\Ps E}(-1)\to\pi _s^*E\to\pi _s^*L$, i.e.\ of a section of $\Oc_{\Ps E}(1)\otimes\pi _s^*L$, so $[\Sigma]=h+\pi _s^*c_1(L)$ with $h=c_1(\Oc_{\Ps E}(1))$. The universal rank two quotient is $Q_2=\pi _s^*E/\Oc_{\Ps E}(-1)$, whence $\chi=\pi _s^*c_1(E)+h$ and $[\Sigma]=\chi-\pi _s^*c_1(F)$. In case (2,1), $\Sigma\subset\Ps(E^\vee)$ is the zero locus of $\Oc_{\Ps(E^\vee)}(-1)\to\pi _s^*E^\vee\to\pi _s^*L^{-1}$, a section of $\Oc_{\Ps(E^\vee)}(1)\otimes\pi _s^*L^{-1}$, so $[\Sigma]=h-\pi _s^*c_1(L)$; here the universal rank one quotient is $Q_1=\Oc_{\Ps(E^\vee)}(1)$, so $\chi=h$.
\end{proof}

\begin{prop}\label{prop:conj12}
Let $\mathfrak  E=(L\oplus F,\phi )$ be a system of type  \textup{(2,1)} or \textup{(1,2)} on a smooth projective variety $X$ of dimension $n$, and let $s$ be as in \eqref{eq:Sigma} ($s=1$ in case \textup{(2,1)}, $s=2$ in case \textup{(1,2)}). The following are equivalent:
\begin{enumerate}\itemsep=-2pt
\item[(i)] $\mathfrak  E$ is curve semistable;
\item[(ii)] $\mathfrak  E$ is semistable with respect to some polarization and the class $\theta_s(\mathfrak  E)$ restricted to $\HGr_{r-s}(\mathfrak  E)$ is nef;
\item[(iii)] $\mathfrak  E$ is semistable with respect to some polarization and $\Delta(E)=0$.
\end{enumerate}
In particular the Conjecture holds for all systems of  type $(1,2)$ and $(2,1)$, on smooth projective varieties of any dimension.
\end{prop}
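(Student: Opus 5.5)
We prove the equivalences cyclically: (i)$\Rightarrow$(ii)$\Rightarrow$(iii)$\Rightarrow$(i).

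\emph{The implication (iii)$\Rightarrow$(i).} This is the known direction (1)$\Rightarrow$(2) of \cite{BHR06}, recalled in the Introduction.

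\emph{The implication (i)$\Rightarrow$(ii).} Curve semistability implies semistability with respect to every polarization. By the criterion of \cite{BHR06,BGO11} recalled in Section~\ref{sec:hgr}, it also implies that $\theta_s(\mathfrak E)$ is nef on $\HGr_{r-s}(\mathfrak E)$ for every $s$, and in particular for the $s$ fixed in \eqref{eq:Sigma}.

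\emph{The implication (ii)$\Rightarrow$(iii).} This is the real content, and I would follow the three steps of Proposition~\ref{prop:BGH} using the component $\Sigma$ of Lemma~\ref{lem:Sigma}. That proposition cannot be quoted verbatim, since its hypothesis is curve semistability rather than (ii); only nefness on $\Sigma$ and semistability enter its argument, so I redo it.

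\emph{Step 1: the surface case.} By Lemma~\ref{lem:Sigma}(i)--(ii), $\Sigma$ is an integral divisor in $\Gr_{r-s}(E)$ dominating $X$ and a component of $\HGr_{r-s}(\mathfrak E)$. Hence $\theta_s(\mathfrak E)|_\Sigma$ is nef, and $\int_\Sigma\theta_s^{n+1}\ge0$.

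Rather than rely on the constant from \cite{BGH23}, I would compute this integral directly from Lemma~\ref{lem:Sigma}(iii). Here $\theta_s=\chi-\frac s3\pi_s^*c_1(E)$, so $[\Sigma]=\theta_s+\pi_s^*\beta_1$ with $\beta_0=1$ and
\[
\beta_1=\tfrac13c_1(E)-c_1(L) \ \text{in case (2,1)},\qquad \beta_1=\tfrac23c_1(E)-c_1(F) \ \text{in case (1,2)}.
\]
Then $\int_\Sigma\theta_s^{3}=\int_{\Ps^2\text{-bundle}}\theta_s^4+\pi_s^*\beta_1\cdot\theta_s^3$. Using $\pi_*\theta^{\,r+1}=-\Delta(E)$, $\pi_*\theta^{\,r}=0$ (this is what the normalization is designed for), and $\pi_*\theta^{\,r-1}=1$, this equals $-\Delta(E)+\beta_1\cdot 0=-\Delta(E)$. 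For the rank two quotients in case (1,2) I would instead use the dual identification $\Gr_1(E)\cong\Ps(E^\vee)$ for $E^\vee$; the class $\theta_2$ corresponds to $-\theta_1(E^\vee)$ up to sign conventions, and $\Delta(E^\vee)=\Delta(E)$. I would double-check the sign of $\theta^3$ under this duality: the odd power introduces a sign, which I expect to be compensated by the sign in the class of $\Sigma$. Nefness then gives $\Delta\le0$, and Bogomolov's inequality for semistable Higgs bundles \cite{Simpson88} gives $\Delta\ge0$.

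\emph{Steps 2 and 3: arbitrary dimension.} For $n>2$ I restrict to a general complete-intersection surface $Y$ cut out by divisors in $|mH|$. The restriction $E|_Y$ is again a system of the same type with $\phi|_Y\ne0$, since $Z$ is a proper closed subset. It is semistable by Mehta--Ramanathan for Higgs sheaves, and $\Sigma|_Y$ is the corresponding $\Ps(F|_Y)$, on which $\theta_s$ stays nef. So $\Delta(E)\cdot H^{n-2}=0$, and Step~3 of Proposition~\ref{prop:BGH} (Simpson's theorem applied to $\mathrm{End}\,\mathfrak E$) gives $\Delta(E)=0$.

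\emph{Main obstacle.} The main obstacle is getting the intersection computation and its sign right, especially in case (1,2), where the quotients have rank $2$. There I would rely on the identity \eqref{eq:thetacube} with $\beta_0=1$ from \cite{BGH23} if the direct check becomes messy.
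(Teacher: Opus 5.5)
Your proposal is correct and follows the paper's route: the paper just notes that the component $\Sigma$ of Lemma~\ref{lem:Sigma} satisfies the hypotheses of Proposition~\ref{prop:BGH}. Your version adds two things: it makes explicit that only nefness on $\Sigma$ and semistability are used (needed for (ii)$\Rightarrow$(iii), since Proposition~\ref{prop:BGH} assumes curve semistability), and it computes $\int_\Sigma\theta_s^3=-\Delta(E)$ directly. The sign worry in case (1,2) is unfounded, because on $\Ps E$ one has $\theta_2(E)=h+\tfrac13\pi^*c_1(E)=\theta_1(E^\vee)$ exactly, so the push-forward identities apply verbatim with $\beta_0=1$ (and Mehta--Ramanathan is dispensable, as Bogomolov's inequality $\Delta(E)\cdot H^{n-2}\ge0$ holds directly on $X$).
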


\begin{proof}
Lemma \ref{lem:Sigma} shows that $\Sigma$ satisfies the hypotheses of Proposition \ref{prop:BGH}, which gives the claim.
\end{proof}

\section{Rank three --- the nilpotent case}
\label{sec:rank3nilp}

In this section we show that the conjecture on curve semistable Higgs bundles holds for all rank three Higgs bundles whose Higgs field is nilpotent, on smooth projective varieties of any dimension (actually, the same is true for a rank three Higgs bundle $\E=(E, \phi)$ such that $\phi -\frac13 \tr \phi \otimes \mathrm{id}_E$ is nilpotent). We shall see that the problem either reduces to the case of a system of type $(1,2)$ or $(2,1)$, as studied in Section~\ref{sec:conjecture}; or the Higgs field generically has a single Jordan block of size three. The second case is handled by means of a result of Bogomolov on line subbundles of $\Omega^1_X$.

Throughout, $X$ is a smooth projective variety of dimension $n$, and $\mathfrak  E=(E,\phi )$ is a Higgs bundle of rank $r=3$ on $X$. We say that $\phi $ is \emph{nilpotent} if for every local vector field $t$ the endomorphism $\phi _t=\langle\phi ,t\rangle$ of $E$ is nilpotent; equivalently, the coefficients $\sigma_i\in H^0(X,\textrm{Sym}^i\Omega^1_X)$ of the characteristic polynomial of $\phi $ vanish.

There are two situations, according to whether the Higgs field is nilpotent of order two or three.
For $k\ge1$ let
$\phi^{(k)}\colon E\to E\otimes\operatorname{Sym}^k \Omega^1_X$
be the $k$-fold iterate of $\phi$.
Moreover $\phi_t$ will denote the contraction of $\phi$ with a local vector field $t$.  We set
\begin{equation}\label{eq:KI}
K=\ker\phi ,\qquad I=\sum_t\operatorname{Im} \phi_t ,
\end{equation}
the sum being over all local vector fields $t$; $I$ is the image of the contraction $E\otimes T_X\to E$ defined by $\phi$. The subsheaf $K$ is saturated, as $E/K$ embeds into the locally free sheaf $E\otimes\Omega^1_X$; the subsheaf $I$ need not be, and we write $\bar I$ for its saturation in $E$.

\begin{lemma}\label{lem:dichotomy}
Let $\phi$ be nilpotent and nonzero. Then $\phi^{(3)}=0$, and   one of the following holds:
\begin{enumerate}
\item[\textup{(N1)}] $\phi^{(2)}=0$, i.e.\ $\phi_s\phi_t=0$ for all local vector fields $s,t$. In this case $I\subset K$ and $1\le\rk I\le\rk K\le2$.
\item[\textup{(N2)}] $\phi^{(2)}\ne0$. In this case some $\phi_t$ is a regular nilpotent endomorphism at the generic point of $X$, and
\begin{equation}\label{eq:flag}
0\subset E_1:=\ker\phi\subset E_2:=\ker\phi^{(2)}=\phi^{-1}(E_1\otimes\Omega^1_X)\subset E
\end{equation}
is a flag of saturated subsheaves with $\rk E_i=i$ and $\phi(E_i)\subset E_{i-1}\otimes\Omega^1_X$ for $i=1,2,3$, where $E_0=0$ and $E_3=E$.
\end{enumerate}
\end{lemma}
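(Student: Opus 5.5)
The plan is to work at the generic point of $X$ and use that $\phi\wedge\phi=0$ makes all the $\phi_t$ commute, and then to pass from pointwise linear algebra to saturated subsheaves.

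\emph{Step 1: $\phi^{(3)}=0$.} Fix a point $x$, or better the generic point, and consider the family $\{\phi_t\}_{t\in T_xX}$. Commutativity gives $\phi_s\phi_t=\phi_t\phi_s$. Each $\phi_t$ is nilpotent by hypothesis. A commuting family of nilpotent endomorphisms is simultaneously strictly upper triangularizable, by Engel's theorem or by the standard common-kernel argument: commuting operators preserve each other's kernels, so the family has a common eigenvector, and one inducts. Strictly upper triangular $3\times3$ matrices satisfy $N_1N_2N_3=0$. Hence $\phi_u\phi_s\phi_t=0$ for all $u,s,t$, i.e.\ $\phi^{(3)}=0$. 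Since $\phi^{(3)}$ is a morphism of locally free sheaves, it suffices to check this on a dense open set.

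\emph{Step 2: the case (N1).} If $\phi^{(2)}=0$, then $\phi_s\phi_t=0$ for all $s,t$, so $\operatorname{Im}\phi_t\subset\ker\phi_s$ for all $s$. Thus $I\subset K$. Since $\phi\ne0$, we have $\rk I\ge1$. Also $K\ne E$, so $\rk K\le2$.

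\emph{Step 3: the case (N2), regular nilpotency.} Now $\phi^{(2)}\ne0$, so at the generic point there are $s,t$ with $\phi_s\phi_t\ne0$. The function $(s,t)\mapsto\phi_s\phi_t$ is a symmetric bilinear map (by commutativity) with values in $\operatorname{End}E$. By polarization, $\phi_s\phi_t=\frac12\bigl(\phi_{s+t}^2-\phi_s^2-\phi_t^2\bigr)$. So if $\phi_u^2=0$ for all $u$, then $\phi_s\phi_t=0$ for all $s,t$, a contradiction. Hence $\phi_u^2\ne0$ for some $u$. A nilpotent $3\times 3$ matrix with nonzero square is regular nilpotent. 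Since $\phi_u^2\ne0$ is an open condition, this holds on a dense open set.

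\emph{Step 4: the flag.} Define $E_1=\ker\phi$ and $E_2=\phi^{-1}(E_1\otimes\Omega^1_X)$, which equals $\ker\phi^{(2)}$ because $\Omega^1$ is locally free. Both are saturated, since $E/E_1$ embeds in $E\otimes\Omega^1_X$ and $E/E_2$ embeds in $(E/E_1)\otimes\Omega^1_X$, which is torsion free.

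For the ranks, work generically with the simultaneous strict upper triangular form. The common kernel $E_1$ contains $\ker\phi_u$, which is a line, so $\rk E_1=1$. Likewise $E_2\subset\ker\phi_u^2$, which is a plane. One has $\rk E_2\ge 2$, because $\phi^{(3)}=0$ gives $\operatorname{Im}\phi\subset E_2\otimes\Omega^1$. Alternatively, $E_2\supset\operatorname{Im}$ of the contraction, and this has rank at least $2$ since $\phi_u$ has rank 2.

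The inclusions $\phi(E_i)\subset E_{i-1}\otimes\Omega^1_X$ hold by definition for $i=1,2$. For $i=3$ the inclusion $\phi(E)\subset E_2\otimes\Omega^1$ is equivalent to $\phi^{(3)}=0$.

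\emph{Main obstacle.} The delicate points are keeping track of ranks versus generic behaviour, and justifying that the sheaf-theoretic kernels coincide with the pointwise ones generically. I would handle both by restricting to the dense open set where all the relevant ranks are constant, and then invoking saturation.
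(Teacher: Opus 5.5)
Your argument follows the paper's proof closely: common kernel vectors and induction give $\phi^{(3)}=0$, polarization produces $u$ with $\phi_u^2\ne0$, and ranks are counted against $\ker\phi_u$, $\ker\phi_u^2$ and $\operatorname{Im}\phi_u$. However, one inclusion in Step 4 is backwards.

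The common kernel $E_1=\bigcap_t\ker\phi_t$ is \emph{contained in} $\ker\phi_u$, not the other way round. That inclusion gives only $\rk E_1\le1$. The reverse containment $E_1\supset\ker\phi_u$ is not available a priori: given $E_1\subset\ker\phi_u$, it amounts exactly to $E_1\ne0$, which is the point still to be proved.

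You need a separate lower bound. Since $\phi^{(3)}=0$, one has $\phi_t\phi_u^2=0$ for all $t$, so $0\ne\operatorname{Im}\phi_u^2\subset E_1$; the paper uses $\operatorname{Im}(\phi_s\phi_t)\subset E_1$ in the same way. Alternatively, run the common-kernel-vector argument of your Step 1 over the function field of $X$.

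With this repair the proof is complete and coincides with the paper's. Your treatment of $E_2$ is already correct: the upper bound comes from $E_2\subset\ker\phi_u^2$ and the lower bound from $\operatorname{Im}\phi_u\subset E_2$.
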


\begin{proof}
At a point $x\in X$ the endomorphisms $\phi_t(x)$ of $E_x$, $t\in T_xX$, are nilpotent and commute with one another, the latter being the commutativity condition $\phi\wedge\phi=0$. A commuting family of nilpotent endomorphisms of a vector space has a common kernel vector, hence, inductively, a common invariant flag on whose graded pieces $\phi$  acts by zero; on the three-dimensional space $E_x$ the $\phi_t(x)$ are therefore simultaneously strictly triangular, and every product of three of them vanishes. Thus $\phi^{(3)}=0$. 

\textup{(N1)}   As $\phi\ne0$ we have $I\ne0$ and $K\ne E$, whence the bounds on the ranks.

\textup{(N2)} Choose $s,t$ with $\phi_s\phi_t\ne0$ at the generic point. If $\phi_s^2$ and $\phi_t^2$ both vanish there, then $(\phi_s+\phi_t)^2=2\phi_s\phi_t\ne0$; so in any case there is a $u$ with $\phi_u^2\ne0$ at the generic point. A nilpotent endomorphism of a three-dimensional space with nonzero square is regular, so at the generic point $\ker\phi_u$, $\ker\phi_u^2$ and $\Im\phi_u$ have dimensions $1$, $2$ and $2$. Now $E_1\subset\ker\phi_u$, and $E_1\supset\Im(\phi_s\phi_t)\ne0$ because $\phi^{(3)}=0$; hence $\rk E_1=1$. Likewise $E_2\subset\ker\phi_u^2$, and $E_2\supset\Im\phi_u$ because $\phi^{(3)}=0$; hence $\rk E_2=2$. Both $E_1$ and $E_2$ are kernels of morphisms to locally free sheaves, so they are saturated. The equality $\ker\phi^{(2)}=\phi^{-1}(E_1\otimes\Omega^1_X)$ is again $\ker(\phi\otimes1)=E_1\otimes\Omega^1_X$, and it gives $\phi(E_2)\subset E_1\otimes\Omega^1_X$; in the same way $\phi^{(3)}=0$ gives $\phi(E)\subset E_2\otimes\Omega^1_X$, and $\phi(E_1)=0$ by definition.
\end{proof}

 Since $K$ and the $E_i$ are saturated in a locally free sheaf, they are reflexive \cite[Lemma 1.1.16]{OSS11}, hence locally free when $n=2$; but they need not be subbundles. In case \textup{(N2)} the quotients $E_2/E_1$ and $E/E_2$ are torsion free of rank one and, on a surface, of the form $L_2\otimes I_{Z_2}$ and $L_3\otimes I_{Z_3}$ with $L_i$ line bundles and $Z_i$ zero-dimensional subschemes. In case \textup{(N1)} all three pairs $(\rk I,\rk K)=(1,1),(1,2),(2,2)$ occur; Proposition \ref{lem:N1} below treats the cases $\rk K=2$ and $\rk I=1$, which exhaust the possibilities, and when $\rk I=1$, $\rk K=2$ both of its descriptions apply.

\parag{Case \textup{(N1)}: systems of type $(1,2)$ and $(2,1)$ up to extension}

\begin{prop}\label{lem:N1}
Let $\mathfrak E$ be as in case \textup{(N1)}, with $K$, $I$ as in \eqref{eq:KI} and $\bar I$ the saturation of $I$. Then $\phi$ factors as
$$
E\to E/K\xrightarrow{\ \psi\ }I\otimes\Omega^1_X\hookrightarrow K\otimes\Omega^1_X\hookrightarrow E\otimes\Omega^1_X ,
$$
and:
\begin{enumerate}
\item[(i)] if $\rk K=2$, let $U\subset X$ be the dense open set where $\phi\ne0$ and $K$ is a subbundle of $E$, and let $\Sigma\subset\Ps E$ be the closure of $\Ps(K|_U)$. Then $\Sigma$ is an irreducible component of the Higgs Grassmannian $\HGr_1(\mathfrak E)\subset\Ps E$ of rank two Higgs quotients, it is a divisor in $\Ps E$ and it surjects onto $X$. If $K$ is a subbundle of $E$, then $\Sigma=\Ps(K)$;
\item[(ii)] if $\rk I=1$, then $\bar I$ is a line bundle. Let $U\subset X$ be the dense open set where $\phi\ne0$ and $I$ is a subbundle of $E$, so that $I|_U=\bar I|_U$, and let $\Sigma\subset\Ps(E^\vee)$ be the closure of $\Ps((E/\bar I)^\vee|_U)$. Then $\Sigma$ is an irreducible component of the Higgs Grassmannian $\HGr_2(\mathfrak E)\subset\Ps(E^\vee)$ of rank one Higgs quotients, it is a divisor in $\Ps(E^\vee)$ and surjects onto $X$. If $\bar I$ is a subbundle of $E$, then $\Sigma=\Ps((E/\bar I)^\vee)$.
\end{enumerate}
Moreover, $\mathfrak E$ is curve semistable if and only if it is semistable with respect to some polarization and $\Delta(E)=0$.
\end{prop}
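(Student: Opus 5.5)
The plan has three parts. First I establish the factorization of $\phi$. Then I construct the divisor $\Sigma$ in each of the cases (i) and (ii). Finally I invoke Proposition~\ref{prop:BGH}.

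\emph{Factorization.} By definition $\phi$ kills $K$, so it descends to $E/K\to E\otimes\Omega^1_X$. Its image lies in $I\otimes\Omega^1_X$: locally, writing $\phi=\sum_h\phi_{\partial_h}\otimes dx_h$, the coefficients $\phi_{\partial_h}$ take values in $I$. Condition (N1) says $\phi_s\phi_t=0$ for all $s,t$, which gives $I\subset K$. This yields the displayed chain.

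\emph{Divisors.} By Lemma~\ref{lem:dichotomy}, either $\rk K=2$ or $\rk K=1$. In the latter case $1\le \rk I\le \rk K$ forces $\rk I=1$, so (i) and (ii) exhaust the possibilities.

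For (i), work on $U$. There $K$ is a rank two subbundle and every line $\ell\subset K_x$ is $\phi$-invariant, since $\phi(\ell)=0$. So $\Ps(K|_U)\subset\HGr_1(\mathfrak E)$. Conversely, at a point $x\in U$ a line $\ell=\langle v\rangle$ with $v\notin K_x$ is invariant only if $\phi_t v\in\ell$ for all $t$. But $\phi_t v\in I_x\subset K_x$, so $\phi_tv$ would lie in $\ell\cap K_x=0$ for every $t$, i.e.\ $v\in K_x$, a contradiction. Hence over $U$ the reduced Higgs Grassmannian is exactly $\Ps(K|_U)$, and its closure $\Sigma$ is an irreducible component. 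It has dimension $n+1$, so it is a divisor in $\Ps E$, and it dominates (hence surjects onto) $X$ because it is proper. If $K$ is a subbundle, then $\Ps(K)$ is already closed and irreducible, so $\Sigma=\Ps(K)$.

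For (ii), $\bar I$ is a rank one saturated, hence reflexive, subsheaf of a locally free sheaf, and is therefore a line bundle. Over $U$, a rank one quotient $E_x\to M$ is a Higgs quotient iff its kernel $W$ (a plane) is $\phi$-invariant. If $W\supset I_x$ this holds automatically. If $W\not\supset I_x$, invariance requires $\phi_t(W)\subset W\cap I_x=0$ for all $t$, so $W\subset K_x$; as $\dim W=2\ge\rk K$, this forces $W=K_x$ with $\rk K=2$. Such quotients form a section, of codimension $2$, and do not contribute a divisor. So the quotients factoring through $E/\bar I$, namely $\Ps((E/\bar I)^\vee|_U)$, give a component of dimension $n+1$, which is again a dominating divisor. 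Its closure $\Sigma$ is the required component, with $\Sigma=\Ps((E/\bar I)^\vee)$ when $\bar I$ is a subbundle. This step, ruling out extra dominating components and checking the component claim on the reduced structure, is the main bookkeeping but is elementary.

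\emph{Conclusion.} In either case there is an irreducible component of some $\HGr_{r-s}(\mathfrak E)$ which is a divisor in $\Gr_{r-s}(E)$ and dominates $X$. Proposition~\ref{prop:BGH} then gives that curve semistability implies semistability and $\Delta(E)=0$, while the converse is the known implication (1)$\Rightarrow$(2) of \cite{BHR06}. The case $n=1$ is trivial, since there $\Delta=0$ and curve semistability is semistability. The one point I would double-check is that Proposition~\ref{prop:BGH} only needs a component of the reduced scheme, not local freeness of $K$ or $\bar I$ everywhere. Its proof works on $\Sigma$ with reduced structure and restricts to general surfaces $Y$, where $\Sigma|_Y$ remains a dominating divisorial component, so the non-subbundle loci cause no trouble.
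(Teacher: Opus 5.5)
Your proposal is correct and is essentially the paper's proof: the same factorization, the same fibrewise identification of the reduced Higgs Grassmannian over $U$ in cases (i) and (ii), and the same appeal to Proposition~\ref{prop:BGH}. One small slip in (ii): the case $W\not\supset I_x$ is actually empty, because the only candidate $W=\ker\phi_x$ already contains $I_x$ (since $\phi_s\phi_t=0$, which is exactly your $I\subset K$), so the ``codimension-2 section'' you describe does not exist; this is harmless, since a locus of dimension at most $n$ could not affect the conclusion anyway.
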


\begin{proof}
The factorization follows from $\phi(E)\subset I\otimes\Omega^1_X\subset K\otimes\Omega^1_X$ and $\phi(K)=0$. Since $K$ and $\bar I$ are saturated, $E/K$ and $E/\bar I$ are torsion free, hence locally free outside a closed subset of codimension $\ge2$; so $K$ and $\bar I$ are subbundles of $E$ on a dense open set, and so is $I$, which agrees with $\bar I$ on a dense open set. As $\phi\ne0$, the sets $U$ of (i) and (ii) are dense and open. In case (ii), $\bar I$ is a saturated rank one subsheaf of a locally free sheaf, hence reflexive, hence invertible since $X$ is smooth \cite[Prop.~1.9]{Hart-refl}.

(i) Let $x\in U$. Lines in $K_x$ are invariant, since $\phi(K)=0$. If $v\notin K_x$, invariance of $\langle v\rangle$ would give $\phi_t(v)\in\langle v\rangle\cap K_x=0$ for all $t$, i.e.\ $\psi_x(\bar v)=0$ with $\bar v$ spanning $(E/K)_x$; thus $\psi_x=0$ and $\phi_x=0$, contrary to $x\in U$. So over $U$ the reduced scheme $\HGr_1(\mathfrak E)_{\rm red}$ is the $\Ps^1$-bundle $\Ps(K|_U)$, and its closure $\Sigma$ is an irreducible component of $\HGr_1(\mathfrak E)_{\rm red}$ of dimension $n+1$, hence a divisor in $\Ps E$ dominating $X$. If $K$ is a subbundle, then $\Sigma=\Ps(K)$.

(ii) Let $x\in U$, so that $I_x=\bar I_x\subset E_x$ is the line spanned by the images of the endomorphisms $\phi_t(x)$, $t\in T_xX$. A plane $W\subset E_x$ containing $I_x$ is invariant because $\phi(E)\subset I\otimes\Omega^1_X$. Conversely, if $W\not\supset I_x$ then $W\cap I_x=0$, so invariance $\phi_x(W)\subset W\otimes\Omega^1_{X,x}$ together with $\phi_x(W)\subset I_x\otimes\Omega^1_{X,x}$ gives $\phi_x(W)=0$, i.e.\ $W\subset\ker\phi_x$. Since $\phi_x\ne0$, $\ker\phi_x$ is a proper subspace of $E_x$, so $W=\ker\phi_x$; but $\ker\phi_x=\bigcap_t\ker\phi_t(x)$ contains $\sum_t\Im\phi_t(x)=I_x$, because $\phi_s(x)\phi_t(x)=0$ for all $s,t$, contradicting $W\not\supset I_x$. So over $U$ the fibres of $\HGr_2(\mathfrak E)_{\rm red}$ are exactly the planes through $I_x$, i.e.\ those of the $\Ps^1$-bundle $\Ps((E/\bar I)^\vee|_U)$, and one concludes as in (i).

By Lemma~\ref{lem:dichotomy} at least one of \textup{(i)}, \textup{(ii)} holds true (both when $\rk I=1$, $\rk K=2$), so in every case some Higgs Grassmannian of $\mathfrak E$ has a dominating divisorial component, and Proposition~\ref{prop:BGH} applies, with $s=2$ in case (i) and $s=1$ in case (ii).
\end{proof}

% \begin{remark}
% In Huang's counterexample (see Introduction), here $E$ is a non-split extension $0\to F^*\to E\to F\to0$ on the symmetric square of a plane quintic, with $\phi $ the composite $E\to F\xrightarrow{\psi}F^*\otimes\Omega^1_X\to E\otimes\Omega^1_X$, so $\phi ^{(2)}=0$, and both $K=\ker\phi$ and the saturation $\bar I$ of the image of $\phi$ are equal to $F^*$, of rank two. The mechanism of Proposition \ref{lem:N1} requires $\rk\bar I=1$ (planes through the image line) or $\rk K=r-1$ (lines in the kernel); for Huang's bundle $\rk\bar I=\rk K=2=r-2$ and neither holds, so that the sections $\Ps(F^*)\subset\Ps E$ and $\Ps(F^\vee)\subset\Ps(E^\vee)$ have codimension two and no divisorial component is available, as Section~\ref{sec:hgr} says it cannot be. Since $\rk\bar I\le\rk K\le r-1$, the equality $\bar I=K$ with $\rk K\notin\{1,r-1\}$ needs $r\ge4$; in rank three every Higgs bundle with $\phi ^{(2)}=0$ has a dominating divisorial component and the Conjecture holds.
% \end{remark}

\parag{Case \textup{(N2)} on a surface}

In this paragraph $X$ is a smooth projective surface and $\mathfrak  E$ is in case (N2), with canonical flag \eqref{eq:flag}. We write
\begin{multline*}
E_2/E_1=L_2\otimes I_{Z_2},\qquad E/E_2=L_3\otimes I_{Z_3},\\ L_1:=E_1,\qquad \lambda_i=c_1(L_i),\qquad \zeta_i=\operatorname{length}(Z_i)\ge0,
\end{multline*}where the $L_i$ are line bundles. Then \[c_1(E)=\lambda_1+\lambda_2+\lambda_3 \quad \text{and} \quad c_2(E)=\lambda_1\lambda_2+\lambda_1\lambda_3+\lambda_2\lambda_3+\zeta_2+\zeta_3,\] whence
\begin{equation}\label{eq:Delta3}
\Delta(E)=-\frac16\Big[(\lambda_1-\lambda_2)^2+(\lambda_1-\lambda_3)^2+(\lambda_2-\lambda_3)^2\Big]+\zeta_2+\zeta_3 .
\end{equation}
Note that $\zeta_2=0$ if and only if $E_1$ is a subbundle of $E_2$, and $\zeta_3=0$ if and only if $E_2$ is a subbundle of $E$. If $E_1$ is a subbundle of $E$, i.e.\ if $E/E_1$ is locally free, then $\zeta_2=0$, because $E_2/E_1$ is a saturated rank one subsheaf of the rank two bundle $E/E_1$, hence reflexive, hence a line bundle. So the flag $E_1\subset E_2\subset E$ consists of subbundles if and only if $\zeta_2=\zeta_3=0$, and then in particular $E_1$ is a subbundle of $E$. We also introduce the classes
\begin{equation}\label{eq:uv}
u:=c_1(E)-3\,c_1(E_1)=\lambda_2+\lambda_3-2\lambda_1,\qquad v:=2\,c_1(E)-3\,c_1(E_2)=2\lambda_3-\lambda_1-\lambda_2 .
\end{equation}
% (When $c_1(E)=0$ and the flag splits, these are, up to the factor $3$, the nef classes $b$ and $a$ of Section~\ref{subsec:rk3}: $u=3b$, $v=3a$.) 
Then
\begin{equation}\label{eq:Deltauv}
\Delta(E)=-\frac19\,(u^2-uv+v^2)+\zeta_2+\zeta_3 .
\end{equation}

\begin{lemma}\label{lem:nef}
If $\mathfrak  E$ is curve semistable, the classes $u$ and $v$ are nef.
\end{lemma}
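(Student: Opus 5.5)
The plan is to show that each of $u$ and $v$ is, up to a positive factor, the slope excess of a Higgs subbundle or Higgs quotient, and then use that curve semistability forces such excesses to be nonnegative on every curve.

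\emph{The class $u$.} Since $\phi(E_1)=0$, $E_1$ is a rank one Higgs subsheaf of $\mathfrak E$. Let $f\colon C\to X$ be a smooth projective curve, which we may take not contained in any of the finitely many bad points (the supports of $Z_2$, $Z_3$). Then $f^*E_1\to f^*E$ is a rank one $f^*\phi$-invariant subsheaf: it is killed by $f^*\phi$, because $f^*\phi$ is obtained from $\phi$ by pullback followed by contraction with $df$. Its saturation in $f^*E$ is a line subbundle $M$ with $\deg M\ge\deg f^*L_1$, and $M$ is still $f^*\phi$-invariant. Indeed, $f^*\phi$ maps $M$ into $f^*E\otimes K_C$ and vanishes on the dense open set where $M=f^*E_1$, hence vanishes on $M$ by torsion-freeness. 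Semistability of $f^*\mathfrak E$ gives $\deg M\le\frac13\deg f^*E$, so
\[\deg f^*u=\deg f^*c_1(E)-3\deg f^*\lambda_1\ge0.\]
This holds for every curve through general points. For curves $C\subset X$ meeting the bad points, the same argument works after noting that the pullback of a torsion-free sheaf still maps generically injectively, which is all that is needed. Alternatively, we can simply note that a class which is nonnegative on all irreducible curves is nef, and every irreducible curve admits a normalization $f$ for which $f^*E_1\to f^*E$ is generically injective as long as $C$ is not contained in the degeneracy locus of $E_1\subset E$. That locus is finite on a surface.

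\emph{The class $v$.} Dually, $E_2$ is $\phi$-invariant, since $\phi(E_2)\subset E_1\otimes\Omega^1_X\subset E_2\otimes\Omega^1_X$. So $E\to E/E_2$ is a rank one Higgs quotient, and $E/E_2=L_3\otimes I_{Z_3}$. Pulling back to $C$, the image of $f^*E$ in $f^*L_3$ is a rank one Higgs quotient of degree $\le\deg f^*\lambda_3$, namely $f^*L_3$ twisted down at the points of $f^{-1}(Z_3)$; its kernel is the saturation of the image of $f^*E_2$ and is $f^*\phi$-invariant by the same torsion-freeness argument. Semistability gives
\[\deg f^*\lambda_3\ge\deg(\text{quotient})\ge\tfrac13\deg f^*c_1(E).\]
Since $c_1(E_2)=\lambda_1+\lambda_2$, this yields $3\lambda_3\cdot C\ge c_1(E)\cdot C$, i.e.\ $v\cdot C=(2c_1(E)-3c_1(E_2))\cdot C=(3\lambda_3-c_1(E))\cdot C\ge0$. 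Equivalently, $v=3\theta_1$ restricted to the section of $\Ps(E^\vee)$ defined by the Higgs quotient $E\to E/E_2$ (over the locus where it is a bundle), so nefness of $\theta_1(\mathfrak E)$ gives the same conclusion.

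\emph{Main obstacle.} The delicate point is making the pullback argument rigorous where the flag is not by subbundles, that is, over $Z_2$, $Z_3$ and the points where $E_1$ fails to be a subbundle. The cleanest route is to use the section description. Let $\Sigma_1\subset\Ps E$ be the closure of the section given by $E_1$ over the subbundle locus, and $\Sigma_3\subset\Ps(E^\vee)$ the closure of the section given by $E\to E/E_2$. Both lie in the respective Higgs Grassmannians, being closures of invariant loci in closed schemes. Nefness of $\theta_2(\mathfrak E)$ and $\theta_1(\mathfrak E)$ then restricts to these curves' lifts. For a curve $C$ through a bad point, lift it to the strict transform in $\Sigma_i$ and compare degrees: the tautological class on the lift differs from $f^*\lambda_1$ (resp.\ $f^*\lambda_3$) by an effective correction of the right sign. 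On $\Sigma_1$, $\deg\Oc(-1)|_{\tilde C}\ge\deg f^*\lambda_1$; on $\Sigma_3$, $\deg\Oc(1)|_{\tilde C}\le\deg f^*\lambda_3$. This gives the inequalities in the correct direction.
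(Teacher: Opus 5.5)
Your proof is correct and follows the paper's route: pull back to a curve, saturate the image of $f^*E_1$ (resp.\ of $f^*E_2$), note that the saturation is $f^*\phi$-invariant by torsion-freeness, and apply semistability of $f^*\mathfrak E$. Your treatment of $v$ through the rank one Higgs quotient $f^*E\to f^*L_3(-D)$ is only the dual form of the paper's bound $\deg f^*E_2\le\deg F_2\le\frac23\deg f^*E$, and the section-based detour in your last paragraph is not needed here (the paper uses that device later, in Proposition~\ref{prop:subbundles}).
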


\begin{proof}
Let $f\colon C\to X$ be a morphism from a smooth projective curve, and let $F_i\subset f^*E$ be the saturation of the image of $f^*E_i\to f^*E$. This map is generically injective, since $E_i$ is a subbundle of $E$ outside a finite set, and its kernel is a torsion subsheaf of the locally free sheaf $f^*E_i$ on $C$, hence zero; so $F_i$ is a subbundle of rank $i$ with $\deg F_i\ge\deg f^*E_i$. Since $\phi (E_i)\subset E_{i-1}\otimes\Omega^1_X$, the image of $f^*E_i$ is invariant under $f^*\phi $ (the Higgs field of $f^*\mathfrak  E$ is the composition of $f^*\phi $ with $f^*\Omega^1_X\to\Omega^1_C$), and so is its saturation $F_i$. Semistability of $f^*\mathfrak  E$ gives $\deg f^*E_1\le\deg F_1\le\frac13\deg f^*E$ and $\deg f^*E_2\le\deg F_2\le\frac23\deg f^*E$, i.e.\ $u\cdot f_*C\ge0$ and $v\cdot f_*C\ge0$.
\end{proof}

Now we analyze the structure of the graded pieces of the Higgs field. As $\phi(E_i)\subset E_{i-1}\otimes\Omega^1_X$, the Higgs field induces nonzero sheaf morphisms  
$$
\psi_1\colon L_2\otimes I_{Z_2}\to L_1\otimes\Omega^1_X,\qquad
\psi_2\colon L_3\otimes I_{Z_3}\to L_2\otimes I_{Z_2}\otimes\Omega^1_X .
$$
  Since $\mathrm{Hom}(L\otimes I_Z,F)\simeq \mathrm{Hom}(L,F)$ for a locally free sheaf $F$ and a zero-dimensional subscheme $Z$, we obtain global sections 
$$
\psi_1\in H^0(X,L_1L_2^{-1}\otimes\Omega^1_X),\qquad
\psi_2\in H^0(X,L_2L_3^{-1}\otimes\Omega^1_X),
$$
which we still denote by the same symbols; they are nonzero because so they are on the dense open subset 
$
U=X\setminus(Z_2\cup Z_3)
$.

\begin{lemma}\label{lem:foliation}
There exist a saturated line subbundle $N\subset\Omega^1_X$, and nonzero sections $\alpha\in H^0(L_1L_2^{-1}N)$, $\beta\in H^0(L_2L_3^{-1}N)$ such that $\psi_1=\omega\otimes\alpha$ and $\psi_2=\omega\otimes\beta$, where $\omega\in H^0(\Omega^1_X\otimes N^{-1})$ is the inclusion. Let    $\nu=c_1(N)$. Then,  setting $\alpha'=\lambda_1-\lambda_2+n$ and $\beta'=\lambda_2-\lambda_3+n$, which are classes of effective divisors, one has
\begin{equation}\label{eq:3n}
3\,\nu=u+2\alpha'+\beta'=v+\alpha'+2\beta' .
\end{equation}
\end{lemma}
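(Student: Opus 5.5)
The plan is to extract everything from the commutativity condition $\phi\wedge\phi=0$, evaluated on the graded pieces of the canonical flag \eqref{eq:flag}. First I would work on the dense open set $U=X\setminus(Z_2\cup Z_3)$, where $E_1\subset E_2\subset E$ is a flag of subbundles. Locally there I choose a frame $e_1,e_2,e_3$ adapted to the flag. Since $\phi(E_i)\subset E_{i-1}\otimes\Omega^1_X$, the Higgs field then has strictly upper triangular matrix
$$
\phi=\begin{pmatrix}0&\psi_1&\gamma\\0&0&\psi_2\\0&0&0\end{pmatrix},
$$
where $\psi_1,\psi_2$ are the local expressions of the graded morphisms and $\gamma$ is some $1$-form. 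The only possibly nonzero entry of $\phi\wedge\phi$ is the $(1,3)$ entry, which equals $\psi_1\wedge\psi_2$. Hence $\psi_1\wedge\psi_2=0$ as a section of $L_1L_3^{-1}\otimes\Omega^2_X$ over $U$. Since this bundle is locally free and $U$ is dense, the vanishing holds on all of $X$.

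Next I construct $N$. The nonzero section $\psi_1$ defines an injective map $L_2L_1^{-1}\to\Omega^1_X$, and I let $N\subset\Omega^1_X$ be the saturation of its image. This is a saturated rank one subsheaf of a locally free sheaf, hence reflexive, hence invertible on the smooth surface $X$. Moreover $\Omega^1_X/N$ is torsion free. By construction $\psi_1$ factors through $N\otimes L_1L_2^{-1}$, which gives a nonzero $\alpha\in H^0(L_1L_2^{-1}N)$ with $\psi_1=\omega\otimes\alpha$.

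For $\psi_2$, consider its image in $(\Omega^1_X/N)\otimes L_2L_3^{-1}$. At a general point $\psi_1(x)\ne0$, and then $\psi_1\wedge\psi_2=0$ forces $\psi_2(x)$ to be proportional to $\psi_1(x)$, so it lies in $N_x$. The image is therefore a section of a torsion-free sheaf that vanishes generically, hence it is zero. So $\psi_2$ factors through $N$, giving a nonzero $\beta$ with $\psi_2=\omega\otimes\beta$.

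Since $\alpha$ and $\beta$ are nonzero sections, their divisor classes $\alpha'=\lambda_1-\lambda_2+\nu$ and $\beta'=\lambda_2-\lambda_3+\nu$ are effective. (The $n$ appearing in the statement should be read as $\nu$.) Finally, \eqref{eq:3n} follows directly from \eqref{eq:uv}:
$$
u+2\alpha'+\beta'=(\lambda_2+\lambda_3-2\lambda_1)+(2\lambda_1-2\lambda_2+2\nu)+(\lambda_2-\lambda_3+\nu)=3\nu,
$$
and in the same way $v+\alpha'+2\beta'=3\nu$.

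The only delicate point is extending the pointwise proportionality of $\psi_1$ and $\psi_2$ across the points where $\psi_1$ vanishes or the flag fails to be a flag of subbundles. Taking the saturation, so that $\Omega^1_X/N$ is torsion free, together with the density of $U$ handles this.
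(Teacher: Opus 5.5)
Your proof is correct and follows the paper's argument: you obtain $\psi_1\wedge\psi_2=0$ from the $(1,3)$ entry of $\phi\wedge\phi$ in an adapted frame over $U$, extend it to $X$ by torsion-freeness, saturate to get $N$, and then derive \eqref{eq:3n} by direct computation (reading $n$ as $\nu$, as you rightly note). The only cosmetic difference is that you saturate the image of $\psi_1$ alone and then use torsion-freeness of $\Omega^1_X/N$ to factor $\psi_2$ through it, whereas the paper saturates the rank-one image of $\Psi=(\psi_1,\psi_2)$ directly; the two saturations coincide.
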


\begin{proof}
We first show that $\psi_1\wedge\psi_2=0$ as a section of $L_1L_3^{-1}\otimes\Omega^2_X$ on $X$. Since this sheaf is torsion free and $U$ is dense, it is enough to check it on $U$. We may assume $U$ is affine; over it the bundle $E$ admits a local splitting $E=L_1\oplus L_2\oplus L_3$ compatible with the flag, and with respect to such a splitting, with local frame $e_1,e_2,e_3$, one has $\phi(e_1)=0$,
$\phi(e_2)=\psi_1\otimes e_1$ and $\phi(e_3)=\psi_2\otimes e_2+\omega'\otimes e_1$ for some
local $1$-form $\omega'$. Hence $\phi\wedge\phi(e_3)=\psi_2\wedge\psi_1\otimes e_1$, all other
components of $\phi\wedge\phi$ vanishing for degree reasons, and the condition
$\phi\wedge\phi=0$ gives $\psi_1\wedge\psi_2=0$ on $U$, hence on $X$.

 The vanishing of $\psi_1\wedge\psi_2$ says that the morphism of locally free sheaves
$$
\Psi=(\psi_1,\psi_2)\colon L_2L_1^{-1}\oplus L_3L_2^{-1}\longrightarrow\Omega^1_X
$$
has rank $\le1$ at every point of $X$, and it has rank one at the generic point since $\psi_1\neq0$.
Its image is therefore a rank one subsheaf of $\Omega^1_X$; let $N$ be its saturation. A saturated subsheaf of a locally free sheaf is reflexive, and a reflexive sheaf of rank one on a smooth variety is a line bundle, so $N$ is a line subbundle of $\Omega^1_X$, say with inclusion $\omega\colon N\hookrightarrow\Omega^1_X$. Since $\Psi$ takes values in $N$, its components factor as $\psi_1=\omega\circ\alpha$ and
$\psi_2=\omega\circ\beta$ for unique morphisms  $\alpha\in H^0(L_1L_2^{-1}N)$ and
$\beta\in H^0(L_2L_3^{-1}N)$; both are nonzero because $\psi_1,\psi_2$ are. This is the first assertion. The classes $\alpha'=\lambda_1-\lambda_2+\nu$ and $\beta'=\lambda_2-\lambda_3+\nu$ are those of the   divisors of zeroes of the nonzero sections $\alpha$ and $\beta$, hence effective, and the formulas \eqref{eq:3n} follow from \eqref{eq:uv}:
$u=-2(\lambda_1-\lambda_2)-(\lambda_2-\lambda_3)=-2(\alpha'-\nu)-(\beta'-\nu)$, and similarly for $v$.
\end{proof}

We now use a theorem of Bogomolov \cite{Bogomolov78} and Sommese \cite{SS}\footnote{This result also appears in \cite[Cor.~6.9]{EV92}, in Reid's exposition  of Bogomolov's inequality \cite[Theorem 2]{Reid77}, in \cite[Theorem 8.3]{Greb-et-al},
and, exactly as stated below, in \cite[Theorem 1.1]{Graf}.
Actually the result as stated here does not appear explicitly in \cite{SS} but it is rather a consequence of Theorems 3.36 and 3.37 there, as spelled out in \cite{EV92}.} that an invertible subsheaf of $\Omega^p_X$ has Kodaira-Iitaka dimension at most $p$
(the Kodaira-Iitaka dimension  $\kappa=\kappa(X,N)$ of a line bundle $N$ on a variety $X$ is the unique $\kappa$ such that there exist constants $0<a<b$ for which
$$ a m^\kappa \le h^0(X,N^m) \le bm^\kappa$$
for $m\gg 0$, with the convention  that $\kappa(X,N)=-\infty$ if  $ h^0(X,N^m)=0$ for $m>0$). If $\kappa(X,N)=\dim X$ then $N$ is said to be \emph{big}; if $N$ is nef, that is equivalent to
$c_1(N)^{\dim X}>0$.

\begin{lemma}[Bogomolov's lemma]\label{lem:bogomolov}
Let $X$ be a smooth projective surface and $N\subset\Omega^1_X$ an invertible subsheaf. Then $\kappa(X,N)\le1$; in particular $N$ is not big.
\end{lemma}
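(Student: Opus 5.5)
Suppose, for contradiction, that $\kappa(X,N)=2$, i.e.\ $N$ is big. The plan is to show this is incompatible with $N$ sitting inside $\Omega^1_X$, by combining the Bogomolov--Sommese vanishing with integrability. The inclusion $\omega\colon N\hookrightarrow\Omega^1_X$ is a twisted $1$-form. Any twisted $1$-form of rank one on a surface satisfies $\omega\wedge d\omega=0$ for trivial reasons, since this is a $3$-form. So $\omega$ defines a (possibly singular) foliation by curves on $X$. Locally $\omega=g\,df$ for a local first integral $f$, at least analytically near a general point.

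First, I would pass to a cyclic cover to turn sections of $N^m$ into honest forms. Choose $m$ with $h^0(N^m)\ge 3$, so that the map $\varphi_m$ given by $|N^m|$ has $2$-dimensional image. Take a nonzero $s\in H^0(N^m)$ and the normalized $m$-th root cover $\tau\colon Y\to X$ along $s$, resolved to be smooth. On $Y$, $\tau^*N$ has a tautological section $\sigma$ with $\sigma^m=\tau^*s$, so $\sigma\cdot\tau^*\omega$ is a holomorphic $1$-form on $Y$, away from the branch divisor. The standard check, as in Bogomolov's argument, shows it extends holomorphically.

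Next, I would get a contradiction from $\kappa=2$. For sections $s_0,s_1$ of $N^m$, the ratio $h=s_1/s_0$ is a rational function on $X$. The forms $s_i\,\omega^{\otimes m}$ are sections of $N^m\otimes(\Omega^1)^{\otimes m}$ lying in the line subbundle $N^{2m}$. The key step is to show that the fibres of $\varphi_m$, i.e.\ the level curves of the ratios $h$, are leaves of the foliation. Equivalently, I want $dh\wedge\omega=0$. Granting this, $\varphi_m$ would be constant along the leaves, so its image has dimension at most $1$. This contradicts bigness, giving $\kappa\le1$.

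To prove $dh\wedge\omega=0$, I would use closedness of holomorphic forms on compact Kähler manifolds, which is Hodge theory. Applied on a smooth model of the cover, it says $d(\sigma\tau^*\omega)=0$. Locally this gives $d(s_0^{1/m}\omega)=0$ and $d(s_1^{1/m}\omega)=0$, with compatible branches. Subtracting, $d\bigl(h^{1/m}\bigr)\wedge s_0^{1/m}\omega=0$, hence $dh\wedge\omega=0$. I expect this step to be the main obstacle: making the cover smooth and verifying that the pulled-back forms remain holomorphic across the branch and exceptional loci. If it resists, I would simply invoke the cited statement \cite[Theorem~1.1]{Graf} or \cite[Cor.~6.9]{EV92} for $p=1$ on a surface.

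Finally, nefness is not needed for $\kappa\le1$. The "in particular" clause is immediate, since big means $\kappa=\dim X=2$.
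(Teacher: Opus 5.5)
The paper does not prove this lemma. It quotes it from Bogomolov and Sommese and refers to the sources listed in its footnote for the statement, so your argument is a genuinely different, self-contained route. What you wrote is the classical proof: a cyclic cover, closedness of holomorphic forms on a compact K\"ahler manifold, and the identity $dh\wedge\omega=0$, specialized to $p=1$. It is correct. Compared with the citation, it buys self-containedness, and the same computation plus a little linear algebra gives $\kappa(X,N)\le p$ for $N\subset\Omega^p_X$. The paper's approach simply buys brevity.

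Several points should be tightened.

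\begin{enumerate}
\item The step you single out as the main obstacle is not one. Let $\tilde\tau\colon\tilde Y\to X$ be a resolution of the normalized cyclic cover, and $\tilde\sigma$ the pullback of the tautological section of $\tau^*N$. Then $\tilde\sigma\cdot\tilde\tau^*\omega$ is the image of a regular section of $\tilde\tau^*N$ under $\tilde\tau^*N\to\tilde\tau^*\Omega^1_X\to\Omega^1_{\tilde Y}$. The last map is pullback of forms along a morphism, so the form is holomorphic on all of $\tilde Y$ and no extension argument is needed. It is nonzero because $\tilde\tau$ is generically \'etale.

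\item You do not need compatible branches or a common cover for $s_0,s_1$. Let $e$ be a local generator of $N$, $\eta=\omega(e)$ the corresponding local $1$-form, and $s_i=g_i e^{m}$. Closedness of $g_i^{1/m}\eta$ is a local statement on $X$ away from the branch locus, and it does not depend on the branch. Running the construction separately for $s_0$ and $s_1$ gives $d\eta=-\frac1m\frac{dg_i}{g_i}\wedge\eta$ for $i=0,1$. Hence $\frac{dh}{h}\wedge\eta=0$ with $h=g_1/g_0$, which is exactly what your ``subtracting'' step should say.

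\item The condition $h^0(N^m)\ge 3$ does not force the image of $\varphi_m$ to be two-dimensional, but you need not choose $m$ at all. For every $m$, the differentials of all ratios of sections of $N^m$ are proportional to $\eta$ at a general point. So $d\varphi_m$ has rank at most $1$ generically, and by generic smoothness $\dim\varphi_m(X)\le 1$. This is $\kappa(X,N)\le 1$ directly, without arguing by contradiction.

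\item The remarks about local first integrals and about $s_i\,\omega^{\otimes m}$ lying in $N^{2m}$ play no role and can be dropped.
\end{enumerate}
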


\begin{prop}\label{prop:uv}
If $\mathfrak  E$ is curve semistable, then $u^2=uv=v^2=0$, and therefore $\Delta(E)=\zeta_2+\zeta_3$.
\end{prop}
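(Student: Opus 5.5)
The plan is to combine the nefness of $u$ and $v$ from Lemma~\ref{lem:nef} with the decomposition \eqref{eq:3n} of Lemma~\ref{lem:foliation}, and to rule out bigness using Bogomolov's lemma (Lemma~\ref{lem:bogomolov}). Since $u$ and $v$ are nef on the surface $X$, we have $u^2\ge0$, $v^2\ge0$ and $uv\ge0$. It therefore suffices to show that $(u+v)^2=0$: then $u^2+2uv+v^2=0$ is a sum of nonnegative terms, so each of them vanishes.

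Adding the two expressions in \eqref{eq:3n} gives
\[
6\nu=(u+v)+3(\alpha'+\beta'),
\]
where $\alpha'+\beta'$ is the class of the effective divisor $\operatorname{div}(\alpha)+\operatorname{div}(\beta)$. At the level of line bundles this reads
\[
N^{6}\simeq M\otimes\Oc_X\bigl(3\operatorname{div}(\alpha)+3\operatorname{div}(\beta)\bigr),
\]
with $M=(\det E)^{3}\otimes L_1^{-3}\otimes L_2^{-3}$, an honest line bundle with $c_1(M)=u+v$. I would check this isomorphism directly from $\alpha\in H^0(L_1L_2^{-1}N)$ and $\beta\in H^0(L_2L_3^{-1}N)$, which gives $L_1L_3^{-1}N^2\simeq\Oc(\operatorname{div}\alpha+\operatorname{div}\beta)$. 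Rewriting $M$ in terms of the $L_i$ is then a routine exponent count; if a numerical equality turns out to be all that is available, bigness is numerical, so that suffices.

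Now suppose $(u+v)^2>0$. Then $M$ is nef, being the sum of the nef classes $u$ and $v$, and has positive top self-intersection, so $M$ is big. A big line bundle twisted by an effective divisor is still big, since $h^0$ can only increase. Hence $N^6$ is big, and so $N$ is big. This contradicts Lemma~\ref{lem:bogomolov}, because $N$ is an invertible subsheaf of $\Omega^1_X$. Therefore $(u+v)^2=0$, and $u^2=uv=v^2=0$. Substituting into \eqref{eq:Deltauv} yields $\Delta(E)=\zeta_2+\zeta_3$.

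The only delicate point is the passage from a numerical identity to genuine bigness of $N$. I would handle it either by the explicit line bundle isomorphism above or by invoking the fact that bigness of a line bundle depends only on its numerical class (the big cone is open in $N^1$). Everything else is formal.
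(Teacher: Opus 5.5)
Your argument is correct and is essentially the paper's: the paper rules out $u^2>0$, $v^2>0$ and then $uv>0$ in turn, using \eqref{eq:3n} and $6\nu=(u+v)+3\alpha'+3\beta'$, whereas you note that the last step alone suffices because $u^2,uv,v^2\ge0$ for nef classes. One minor slip: the bundle with $c_1=u+v=3(\lambda_3-\lambda_1)$ is $L_3^{3}\otimes L_1^{-3}$, which is what your own check $L_1L_3^{-1}N^2\simeq\Oc(\operatorname{div}\alpha+\operatorname{div}\beta)$ gives, not $(\det E)^3\otimes L_1^{-3}\otimes L_2^{-3}\simeq L_3^3$.
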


\begin{proof}
By Lemma \ref{lem:nef} the classes $u,v$ are nef. If $u^2>0$, then $u$ is nef and big, and by \eqref{eq:3n} $N^{\otimes3}$ is the sum of a nef and big class and an effective divisor, hence big \cite[\S 2.2]{Laza2004}, contradicting Lemma \ref{lem:bogomolov}. So $u^2=0$, and likewise $v^2=0$. If $uv>0$, then $u+v$ is nef with $(u+v)^2=2uv>0$, hence big, and $N^{\otimes6}=(u+v)+3\alpha'+3\beta'$ is big, again a contradiction. Hence $u^2=uv=v^2=0$, and \eqref{eq:Deltauv} gives $\Delta(E)=\zeta_2+\zeta_3$.
\end{proof}

It remains to show that the flag \eqref{eq:flag} is a flag of subbundles. Now the components of the Higgs Grassmannians dominating $X$ are only sections of codimension two (see \cite[Thm.~4.5]{BGG26}), so  Proposition~\ref{prop:BGH} does not apply, but the nefness of $\theta_s(\mathfrak  E)$ on these sections is enough.

\begin{prop}\label{prop:subbundles}
If $\mathfrak  E$ is curve semistable, then the canonical flag \eqref{eq:flag} is a flag of subbundles: $E_2$ is a subbundle of $E$, and $E_1$ is a subbundle of  $E_2$. In particular $\zeta_2=\zeta_3=0$.
\end{prop}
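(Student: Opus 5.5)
The plan is to argue by contradiction, using curves through the bad points. By Lemma~\ref{lem:nef} and Proposition~\ref{prop:uv}, curve semistability already gives that $u,v$ are nef with $u^2=uv=v^2=0$. The idea is to show that a point of $Z_3$ (resp.\ $Z_2$) forces $v$ (resp.\ $u$) to be strictly positive on \emph{every} curve through that point. We then contradict this by producing a curve through the point on which the class vanishes.

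\emph{Step 1 (local defect).} Let $p\in Z_3$ and let $f\colon C\to X$ be a curve whose image passes through $p$. In the proof of Lemma~\ref{lem:nef} we used $\deg F_2\ge\deg f^*E_2$, where $F_2$ is the saturation of the image of $f^*E_2$. I would sharpen this to
\[
\deg F_2\ \ge\ \deg f^*E_2+\sum_{q\in f^{-1}(p)}\ell_q ,\qquad \ell_q\ge1 .
\]
The reason is that near $p$ the map $E_2\to E$ drops rank, since $E/E_2=L_3\otimes I_{Z_3}$ is not locally free there. Hence the pulled-back map $f^*E_2\to f^*E$ has cokernel with nonzero torsion at each $q\in f^{-1}(p)$, and $\deg F_2-\deg f^*E_2$ is the length of that torsion. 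This is a local computation with the Fitting ideal of $E/E_2$, which is contained in $\mathfrak m_p$. Combined with semistability of $f^*\mathfrak E$, this gives
\[
v\cdot f_*C\ \ge\ 3\sum_q\ell_q\ >\ 0 .
\]
For $p\in Z_2$ I would argue in the same way with $E_1$ and $u$. There, $f^*E_1$ fails to be saturated in $f^*E_2$, and hence in $f^*E$, at the preimages of $p$, so $u\cdot f_*C>0$.

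\emph{Step 2 (curves with zero intersection).} If $v\equiv0$, Step 1 with any curve through $p$ is already a contradiction. Otherwise, since $u,v$ are nef and $u^2=uv=v^2=0$, the Hodge index theorem makes $u,v$ proportional, nef classes of self-intersection zero. Now use Lemma~\ref{lem:foliation}: $3\nu=v+\alpha'+2\beta'$ with $\alpha',\beta'$ effective, and $\kappa(X,N)\le1$ by Lemma~\ref{lem:bogomolov}. I would try to show that some multiple of $N$ has a pencil of sections, so that $N$ (and with it $v$) is pulled back from a fibration $X\to B$ over a curve. The effective parts $\alpha',\beta'$ should then be vertical, by a Zariski-decomposition argument using $v^2=v\cdot\nu=0$. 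With such a fibration available, the fibre $G$ through $p$ satisfies $v\cdot G=0$, contradicting Step 1.

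\emph{Main obstacle.} The delicate point is Step 2: producing a curve through an arbitrary point $p$ on which the nef class $v$ vanishes. Bogomolov's lemma only gives $\kappa(N)\le1$, and $\kappa(N)$ could a priori be $0$ or $-\infty$. In that case I would fall back on the integrable foliation defined by $N\subset\Omega^1_X$ (its kernel in $T_X$). I would test against the leaf, or an algebraic curve tangent to the foliation, through $p$, and try to show that $f^*\psi_1$ or $f^*\psi_2$ vanishes along such curves. On those curves the restricted Higgs field degenerates, and the inequality of Step 1 would then be incompatible with the degree of $f^*N$.

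If this fails, a second route is to exploit $\Delta(E)=\zeta_2+\zeta_3$. One would compare with the graded system of Hodge bundles obtained as the $t\to0$ limit of $(E,t\phi)$, and use the argument of Step 3 of Proposition~\ref{prop:BGH} on a suitable Higgs bundle with vanishing $\mathrm{ch}_2$ to force $\zeta_2=\zeta_3=0$.
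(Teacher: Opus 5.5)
Your Step 1 is sound, but Step 2 is a genuine gap, and it carries the whole weight of the argument. Nothing in the proposal produces a curve through $p$ on which $v$ (or $u$) vanishes.

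\begin{itemize}
\item Nefness together with $v^2=0$ does not give such a curve in general. Mumford's example, the bundle $\Oc_{\Ps G}(1)$ on the ruled surface $\Ps G$ of a suitably general stable rank two bundle $G$ of degree $0$ over a curve of genus at least two, is nef, has zero self-intersection, and is positive on every curve.
\item Lemma~\ref{lem:bogomolov} only gives $\kappa(X,N)\le1$. Since $v$ need not be effective, \eqref{eq:3n} does not exclude $\kappa(X,N)\in\{-\infty,0\}$, so there is no pencil and no fibration.
\item The leaves of the foliation defined by $N$ are in general transcendental, so there need not be an algebraic curve through $p$ tangent to it.
\item Your second fallback is circular. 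Step~3 of Proposition~\ref{prop:BGH} only upgrades $\Delta(E)\cdot H^{n-2}=0$ to $\Delta(E)=0$. On a surface its input is already $\Delta(E)=\zeta_2+\zeta_3=0$, which is what you are trying to prove. Besides, the graded object $L_1\oplus L_2\otimes I_{Z_2}\oplus L_3\otimes I_{Z_3}$ is not locally free.
\end{itemize}

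The missing idea is that no curve with $v\cdot C=0$ is needed. Instead, use that a nef class on a projective surface has nonnegative self-intersection, applied on a modification of $X$ over the bad points.

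The paper does this as follows. It takes the closure $\Sigma_2\subset\Ps(E^\vee)$ of the section defined by the Higgs quotient $E\to E/E_2$ over the locus where $E_2$ is a subbundle; this closure lies in $\HGr_2(\mathfrak E)$. It then takes a resolution $j\colon\widetilde\Sigma_2\to\Sigma_2$, with composite $\tilde\rho\colon\widetilde\Sigma_2\to X$. The pulled back universal quotient is $\tilde\rho^*L_3(-e')$, where $e'$ is effective, supported on $\tilde\rho$-exceptional curves, and nonzero exactly when $E_2$ is not a subbundle. Hence $j^*\theta_1(\mathfrak E)=\frac13\tilde\rho^*v-e'$ is nef, and
$0\le\frac19v^2+e'^2=e'^2$. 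Negative definiteness of the exceptional intersection form then forces $e'=0$. The same argument with $\Sigma_1\subset\Ps E$, $\theta_2$ and $u$ handles $E_1$.

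Your own Step 1 can also be finished this way once it is sharpened. All $2\times2$ minors of $E_2\to E$ lie in $\mathfrak m_p$, so $\ell_q\ge\operatorname{ord}_q(f^*\mathfrak m_p)$, hence $v\cdot C\ge3\operatorname{mult}_pC$ for every curve $C\subset X$. Let $\sigma\colon\widetilde X\to X$ be the blow-up at $p$ with exceptional curve $\epsilon$. The inequality says that $\sigma^*v-3\epsilon$ is nef, while $(\sigma^*v-3\epsilon)^2=v^2-9=-9<0$, a contradiction. The case $p\in Z_2$ is identical with $E_1$ and $u$. With only $\ell_q\ge1$, as you state it, nefness does not follow for curves singular at $p$.
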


\begin{proof}
Consider the rank two Higgs quotients, parameterized by $\HGr_1(\mathfrak  E)\subset\Ps E$, with $\pi \colon\Ps E\to X$ and universal rank two quotient $Q_2=\pi ^*E/\Oc_{\Ps E}(-1)$. Over the open set $U\subset X$ where $E_1$ is a subbundle, the inclusion $E_1\subset E$ defines a section of $\Ps E|_U$ whose image lies in $\HGr_1(\mathfrak  E)$ (lines in $E_1$ are $\phi $-invariant since $\phi (E_1)=0)$. Let $\Sigma_1\subset\Ps E$ be the closure of this image, an irreducible surface contained in $\HGr_1(\mathfrak  E)$, and let $\tilde\rho\colon\widetilde\Sigma_1\xrightarrow{j}\Sigma_1\to X$ be a resolution of singularities of $\Sigma_1$ composed with the projection; $\tilde\rho$ is a birational morphism of smooth projective surfaces, an isomorphism over $U$. We claim that the  pullback of the tautological inclusion gives an injection $\tilde\rho^*E_1\to\tilde\rho^*E$ whose saturation is the line subbundle $M=j^\ast\Oc_{\Ps E}(-1)|_{\Sigma_1}$, and $M=
\tilde\rho^*E_1(e)$ for an effective divisor $e$ supported in the exceptional locus of $j$. To show this, recall
that the tautological inclusion $\Oc_{\Ps E}(-1)\subset\pi^*E$ has, over a point
$\ell\in\Ps(E_x)$, fibre the line $\ell\subset E_x$ itself.
The map $\tilde\rho^*E_1\to\tilde\rho^*E$ is a
nonzero map from a line bundle to a locally free sheaf, hence injective, but at a
point $y$ over $x\in X\setminus U$ the induced map on fibres $E_1(x)\to E(x)$ is
zero, so its image is not saturated along $\tilde\rho^{-1}(X\setminus U)$.
On the other hand $M=j^*\Oc_{\Ps E}(-1)|_{\Sigma_1}$ is a line subbundle of
$\tilde\rho^*E$, being the pullback of a subbundle, with fibre
$M_y=\ell_y$ at the point $y=\ell_y\subset E_{\pi(y)}$. Over $U$ the
points of $\Sigma_1$ are the lines $E_{1,x}\subset E_x$, so $M$ and the image of
$\tilde\rho^*E_1$ agree there; since $\tilde\rho^*E/M$ is torsion-free, the composite
$\tilde\rho^*E_1\to\tilde\rho^*E/M$, which vanishes on a dense open set, is zero, so
$\tilde\rho^*E_1\subset M$ and $M$ is the saturation.
The inclusion $\tilde\rho^*E_1\hookrightarrow M$ of line bundles is a nonzero section of
$M\otimes\tilde\rho^*E_1^{-1}$; if    $e$ is  its divisor of zeroes, then
$M\cong\tilde\rho^*E_1(e)$. As the inclusion is an isomorphism over $U$,
$e$ is supported in $\tilde\rho^{-1}(X\setminus U)$, a union of curves lying over
finitely many points of $X$, contracted by $\tilde\rho$. Conversely, if $E_1$ is not a subbundle at $x$, the section of $M\otimes\tilde\rho^*E_1^{-1}$ vanishes at every point of $\tilde\rho^{-1}(x)$, so $e\ne0$; thus $e=0$ if and only if $E_1$ is a subbundle of $E$. By the projection formula
$e\cdot\tilde\rho^*\gamma=0$ for every divisor class $\gamma$ on $X$, and $e^2<0$ if $e\neq0$, as the intersection form on the exceptional curves of the birational morphism $\tilde\rho$ is negative definite \cite[III.2.1]{BHPV04}.

Now compute the pullback of $\theta_2(  E)=c_1(Q_2)-\tfrac23\pi ^*c_1(E)$ to $\widetilde\Sigma_1$. Since \[j^\ast c_1(Q_2)|_{\Sigma_1}=\tilde\rho^*c_1(E)-c_1(M)=\tilde\rho^*c_1(E)-\tilde\rho^*\lambda_1-e,\] we get
$$
j^\ast\theta_2(\mathfrak  E)|_{\Sigma_1}=\tfrac13\,\tilde\rho^*u-e ,
$$
and, as $e\cdot\tilde\rho^*(\gamma)=0$ for every class $\gamma$ on $X$,
\begin{equation}\label{eq:theta2sq}
\int_{\widetilde\Sigma_1}j^\ast\theta_2(\mathfrak  E)^2=\tfrac19\,u^2+e^2=e^2
\end{equation}
by Proposition \ref{prop:uv}. Since $\mathfrak  E$ is curve semistable, $\theta_2(\mathfrak  E)$ is nef on $\HGr_1(\mathfrak  E)$ \cite{BHR06,BGO11}, hence on $\Sigma_1$ and on $\widetilde\Sigma_1$, so the left hand side of \eqref{eq:theta2sq} is nonnegative and then $e=0$. So $E_1$ is a subbundle of $E$, i.e., $E/E_1$ is locally free, so that $E_2/E_1$, being saturated in it, is a line bundle and then $\zeta_2=0$.

The same argument applied to the rank one Higgs quotients, parameterized by $\HGr_2(\mathfrak  E)\subset\Ps(E^\vee)$, with the closure $\Sigma_2$ of the section defined by $E\to E/E_2$ over the locus where $E_2$ is a subbundle, gives $j^\ast Q_1|_{\Sigma_2}=\tilde\rho^*L_3(-e')$ with $e'$ supported in the exceptional divisor of $j\colon\tilde\Sigma_2\to\Sigma_2$. Hence \[j^\ast\theta_1(\mathfrak  E)|_{\Sigma_2}=\tilde\rho^*l_3-e'-\tfrac13\tilde\rho^*c_1(E)=\tfrac13\tilde\rho^*v-e'\] and \[\int_{\widetilde\Sigma_2}j^\ast\theta_1(\mathfrak  E)^2=\tfrac19v^2+e'^2=e'^2\ge0,\] so $e'=0$, and $E_2$ is a subbundle of $E$, that is, $\zeta_3=0$. Together with the first part,we have that $E_1\subset E_2\subset E$ is a flag of subbundles.
\end{proof}

\begin{prop}\label{prop:N2surface}
Let $\mathfrak  E$ be a rank three Higgs bundle on a smooth projective surface in case \textup{(N2)}. If $\mathfrak  E$ is curve semistable then $\Delta(E)=0$.
\end{prop}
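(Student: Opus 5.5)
The statement follows by combining Propositions \ref{prop:uv} and \ref{prop:subbundles}. Assume $\mathfrak E$ is curve semistable. Curve semistability implies semistability with respect to every polarization. It also implies the nefness of $\theta_s(\mathfrak E)$ on $\HGr_{r-s}(\mathfrak E)$ for $s=1,2$, and by Lemma~\ref{lem:nef} the nefness of the classes $u$ and $v$ of \eqref{eq:uv}.

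\emph{Step 1.} By Proposition~\ref{prop:uv} we have $u^2=uv=v^2=0$. Inserting this into \eqref{eq:Deltauv} gives $\Delta(E)=\zeta_2+\zeta_3$. This step uses the factorization of Lemma~\ref{lem:foliation} through a line subbundle $N\subset\Omega^1_X$, together with Bogomolov's Lemma~\ref{lem:bogomolov}, which rules out that $N$ be big.

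\emph{Step 2.} By Proposition~\ref{prop:subbundles}, the canonical flag $E_1\subset E_2\subset E$ is a flag of subbundles, so $\zeta_2=\zeta_3=0$. That proposition evaluates $\theta_2(\mathfrak E)$ and $\theta_1(\mathfrak E)$ on resolutions of the closures $\Sigma_1$ and $\Sigma_2$ of the sections defined by $E_1$ and $E/E_2$. The self-intersection then reduces to $e^2$ and $e'^2$ respectively, because $u^2=v^2=0$. Nefness forces these to be nonnegative, while negative definiteness of exceptional curves forces them to vanish.

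Combining the two steps gives $\Delta(E)=0$. As a consistency check, Bogomolov's inequality for the semistable Higgs bundle $\mathfrak E$ gives $\Delta(E)\ge0$, which agrees. The argument is therefore just the combination of the two preceding propositions.

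The only point needing care is that Proposition~\ref{prop:subbundles} really uses Proposition~\ref{prop:uv}, and not merely its consequence for $\Delta$: the vanishing of $u^2$ and $v^2$ is what makes the self-intersections equal $e^2$ and $e'^2$. The order Step 1 then Step 2 is therefore forced. There is no circularity, since Proposition~\ref{prop:uv} relies only on the nefness of $u,v$ and on Lemma~\ref{lem:foliation}.
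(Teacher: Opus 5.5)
Your proposal is correct and is exactly the paper's argument. Proposition~\ref{prop:uv} combined with \eqref{eq:Deltauv} gives $\Delta(E)=\zeta_2+\zeta_3$, and Proposition~\ref{prop:subbundles}, which indeed uses $u^2=v^2=0$ to reduce the self-intersections to $e^2$ and $e'^2$, gives $\zeta_2=\zeta_3=0$; the appeal to Bogomolov's inequality is not needed.
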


\begin{proof}
Combine Propositions \ref{prop:uv} and \ref{prop:subbundles} with Equation \eqref{eq:Deltauv}.
\end{proof}

\parag{Proof of the main result} Finally, we collect the previous statements to provide a proof of Theorem \ref{thm:main}.
We may assume $\phi\ne 0$ and $\dim X = n>1$.
The ``if'' implication holds for every Higgs bundle. For the  ``only if,'' recall that curve semistability implies semistability with respect to every polarization.  For $n=2$, in case (N1) the claim is Proposition \ref{lem:N1}, and in case (N2) it is Proposition \ref{prop:N2surface}. The result for $n>2$ is then established as in  Proposition \ref{prop:BGH}.
 
\begin{remark}
    In spite of Huang's counterexample, the techniques of this paper allow one to prove the Conjecture in the nilpotent rank four case for several types of the graded bundle of the nilpotent filtration.
\end{remark}

\let\oldthebibliography\thebibliography
\renewcommand{\thebibliography}[1]{%
  \oldthebibliography{#1}%
  \setlength{\itemsep}{3pt}%
  \setlength{\parskip}{0pt}%
}

 \frenchspacing

\providecommand{\bysame}{\leavevmode\hbox to3em{\hrulefill}\thinspace}
\providecommand{\MR}{\relax\ifhmode\unskip\space\fi MR }
% \MRhref is called by the amsart/book/proc definition of \MR.
\providecommand{\MRhref}[2]{%
  \href{http://www.ams.org/mathscinet-getitem?mr=#1}{#2}
}
\providecommand{\href}[2]{#2}

\end{document}